\newtheorem{lemma}{Lemma}
\newtheorem{theorem}{Theorem}
\newtheorem{corollary}{Corollary}
\newtheorem{remark}{Remark}
\newtheorem{proposition}{Proposition}
\def\blfootnote{\xdef\@thefnmark{}\@footnotetext}\makeatother
\title[The exact order of discrepancy for Levin's normal number in base 2]{\bf The exact order of discrepancy for Levin's normal number in base 2}
\author{Roswitha Hofer and Gerhard Larcher} 
\address{Institute of Financial Mathematics and Applied Number Theory, Johannes Kepler University Linz, Altenbergerstra{\ss}e 69, 4040 Linz, AUSTRIA}
\email{roswitha.hofer@jku.at, gerhard.larcher@jku.at}
\thanks{MSC2020: 11K16, 11K38. \\The authors are supported by the Austrian Science Fund (FWF): Project F5505-N26 and Project F5507-N26, which are both part of the Special Research Program ``Quasi-Monte Carlo Methods: Theory and Applications''}
\begin{document}

\begin{abstract}
Mordechay B. Levin in \cite{Lev} has constructed a number $\alpha$ which is normal in base 2, and such that the sequence $\left\{2^n \alpha\right\}_{n=0,1,2,\ldots}$ has very small discrepancy $D_N$. Indeed we have $N \cdot D_N = \mathcal{O} \left(\left(\log N\right)^2\right)$. That means, that $\alpha$ is normal of extremely high quality. In this paper we show that this estimate is best possible, i.e., $N\cdot D_N \geq c \cdot \left(\log N\right)^2$ for infinitely many $N$. 
\end{abstract}

\date{}
\maketitle

\section{Introduction and statement of the result} \label{sect_a}

A real number $\alpha \in [0,1)$ is called ``normal in base 2'' if in its base 2 representation $\alpha = 0. \alpha_1, \alpha_2 \ldots$ the following holds: For every positive integer $k$ and any 0-1 block $a_1 a_2 \ldots a_k \in \left\{0,1\right\}^k$ of length $k$ we have
$$
\lim_{N \rightarrow \infty} \frac{1}{N} \# \left\{0 \leq n < N \left|\right. \alpha_{n+1} \alpha_{n+2} \ldots \alpha_{n+k} = a_1 a_2 \ldots a_k\right\} = \frac{1}{2^k}.
$$
Of course this is equivalent with the following, seemingly more general, property: For any two blocks $a_1 \ldots a_k$ and $b_1 \ldots b_k$ in $\left\{0,1\right\}^k$ we say that $a_1 \ldots a_k \prec b_1 \ldots b_k$ iff $0 . a_1 \ldots a_k < 0 . b_1 \ldots b_k$. (resp. $a_1 \ldots a_k \preceq b_1 \ldots b_k$ iff $0 . a_1 \ldots a_k \leq 0 . b_1 \ldots b_k$). Then
\begin{eqnarray} \label{equ_a}
&&\lim_{N \rightarrow \infty} \frac{1}{N} \# \left\{0 \leq n < N \left| \right. a_1a_2\ldots a_k \preceq \alpha_{n+1} \ldots \alpha_{n+k} \prec b_1 \ldots b_k\right\} = \\
&&= 0 . b_1 \ldots b_k - 0 . a_1 \ldots a_k. \nonumber
\end{eqnarray}
It is an easy exercise to show that $\alpha$ is normal in base 2 iff the sequence $\left\{2^n \alpha\right\}_{n=0,1,\ldots}$ is uniformly distributed in $[0,1)$. That means: For any $a,b$ with $0 \leq a < b \leq 1$ we have
$$
\lim_{N \rightarrow \infty} \frac{1}{N} \# \left\{0 \leq n < N \left|\right. a \leq \left\{2^n \alpha\right\} < b \right\} = b-a.
$$
The ``quality'' of the uniform distribution of a sequence $(x_n)_{n=0,1,\ldots}$ in $[0,1)$ usually is measured with its discrepancy $D_N$. Here
$$
D_N:=\sup_{0 \leq a < b \leq 1} \left| \left. \frac{1}{N} \# \Big\{0 \leq n < N \right| a \leq x_n < b\Big\} - (b-a)\right|.
$$
$(x_n)_{n=0,1,\ldots}$ is uniformly distributed in $[0,1)$ iff $\lim \limits_{N \rightarrow \infty} D_N = 0$. \\
Now we have the following obvious relation between the discrepancy $D_N$ of $(\left\{2^n \alpha\right\})_{n = 0,1 \ldots}$ and the speed of convergence in \eqref{equ_a}: We have $a_1 a_2 \ldots a_k \preceq \alpha_{n+1} \ldots \alpha_{n+k} \prec b_1 \ldots b_k$ iff $0 . a_1 \ldots a_k \leq \left\{2^n \alpha\right\} < 0 . b_1 \ldots b_k$. Therefore
$$\left|\frac{1}{N} \# \left\{0\leq n < N\left|\right. a_1a_2 \ldots a_k \preceq \alpha_{n+1} \ldots \alpha_{n+k} \prec b_1 \ldots b_k\right\}-\Big(0 . b_1\ldots b_k - 0 . a_1 \ldots a_k\Big)\right|=$$
$$
= \bigg|\frac{1}{N} \# \Big\{0 \leq n < N \left|\right. 0 . a_1 \ldots a_k \leq \left\{2^n \alpha\right\}< 0 . b_1 \ldots b_k\Big\} -
$$
$$
- \Big(0 . b_1 \ldots b_k -0 . a_1 \ldots a_k\Big)\bigg| \leq D_N.
$$
On the other hand: Let $F(N)$ be such that for all positive integers $k$ and all blocks $a_1\ldots a_k$ and $b_1 \ldots b_k \in \left\{0,1\right\}^k$ we have
\begin{eqnarray} \label{equ_b}
&&\bigg|\frac{1}{N} \# \Big\{0 \leq n < N\left|\right. a_1 \ldots a_k \preceq \alpha_{n+1} \ldots \alpha_{n+k} \prec b_1 \ldots b_k \Big\}  - \\
&& - \Big(0 . b_1 \ldots b_k - 0 . a_1 \ldots a_k\Big)\bigg|\leq F(N). \nonumber
\end{eqnarray}
Let $a,b$ with $0 \leq a < b < 1$ be arbitrary and $\varepsilon > 0$. Let $k$ be such that $\frac{1}{2^k} < \varepsilon$ and $a_1 \ldots a_k$ and $b_1 \ldots b_k$ be such that $0 . a_1 \ldots a_k \leq a < 0 . a_1 \ldots a_k + \frac{1}{2^k}$ and $0 . b_1 \ldots b_k \leq b < 0 . b_1 \ldots b_k + \frac{1}{2^k}$. Further we denote by $\overline{b_1 \ldots b_k}$ resp. $\overline{a_1 \ldots a_k}$ the block of digits representing $0 . b_1 \ldots b_k + \frac{1}{2^k}$ resp. $0 . a_1 \ldots a_k + \frac{1}{2^k}$. Then

$$
\left|\frac{1}{N} \# \left\{0 \leq n < N \left|\right.a \leq \left\{2^n \alpha\right\} < b\right\} -(b-a)\right|\leq
$$
$$
\leq \max \left(\frac{1}{N} \# \left\{0\leq n < N\left|a_1 \ldots a_k \preceq \alpha_{n+1} \ldots \alpha_{n+k} \prec \overline{b_1 \ldots b_k}\right\} - \right.\right.
$$
$$
- \left(0 . \overline{b_1 \ldots b_k} - 0 . a_1 \ldots a_k\right) + \frac{1}{2^k},
$$
$$
\left(0 . b_1 \ldots b_k - 0 . \overline{a_1 \ldots a_k}\right) + \frac{1}{2^k}~-
$$
$$
-~ \# \left\{0 \leq n < N \left|~\overline{a_1 \ldots a_k} \preceq \alpha_{n+1} \ldots \alpha_{n+k}\prec b_1 \ldots b_k\right.\right\}\biggr)
$$
$$
\leq F(N) + \varepsilon.
$$
Hence the discrepancy $D_N$ of the sequence $\left\{2^n \alpha\right\}_{n = 0,1, \ldots}$ also is a perfect measure for the ``quality of the normality of $\alpha$ in base 2''. We will say: ``$D_N$ is the discrepancy of the normal number $\alpha$ in base 2''.\\

It was shown by W.M. Schmidt \cite{Schm}, and it is a well-known fact that there is a positive constant $c$, such that for every sequence $(x_n)_{n = 0,1, \ldots}$ in $[0,1)$ we have
$$
D_N\geq c \cdot \frac{\log N}{N}
$$
for infinitely many $N$. So also the discrepancy of any normal number $\alpha$ in base 2 is at least of order $c \cdot \frac{\log N}{N}$. This fact follows from the (highly non-trivial!) general result of Schmidt, but it can also be deduced rather easily directly by the following simple argument:\\

Assume that $D_N \leq \frac{\log N}{N}$ holds for all $N$. Let $M \in \mathbb{N}, L:= \left\lfloor\frac{\log M}{2 \log 2}\right\rfloor$ and $U:= M + L$.\\
Then $\frac{L}{4} \geq \frac{1}{16 \log 2} \log U$ for $M$ large enough. We have
$$
\# \left\{0 \leq n < M \left|x_n \in \left[0,\frac{1}{2^L}\right)\right.\right\}\geq M \cdot \frac{1}{2^L}-M\cdot D_M \geq M \cdot \frac{1}{2^L} - \log M > 0
$$
for $M$ large enough.\\

Hence there is an $m$ with $0 \leq m < M$ and $x_m \in \left[0, \frac{1}{2^L}\right)$. Therefore $x_m, x_{m+1}, \ldots, x_{m+L-1} \in \left[0,\frac{1}{2}\right)$. Hence either
\begin{eqnarray*}
& \# & \left\{0 \leq n < m \left| x_n \in \left[0,\frac{1}{2}\right) \right.\right\} \leq \frac{m}{n} - \frac{L}{4} \quad\quad\quad~\text{or} \\
& \# & \left\{0 \leq n < m + L \left|x_n \in \left[0,\frac{1}{2}\right)\right.\right\} \geq \frac{m}{2} + \frac{3L}{4} = \frac{m+L}{2} + \frac{L}{4}
\end{eqnarray*}
Therefore there is an $N \leq U$ with $N D_N \geq \frac{L}{4} \geq \frac{1}{16 \log 2} \log U \geq \frac{1}{16 \log 2} \log N$. Of course with growing $M$ and hence growing $L$ we can prove the existence of infinitely many such $N$.\\

By an ingenious construction Mordechay B. Levin in \cite{Lev} provided a number $\alpha$ normal in base 2 with discrepancy $D_N \leq c\cdot \frac{(\log N)^2}{N}$ (with an absolute constant $c$). Until then it was only known that for almost all $\alpha$ we have $D_N = \mathcal{O} \left(\left(\frac{\log\log N}{N}\right)^{\frac{1}{2}}\right)$, see \cite{Gal}, and Korobov has given an explicit example of $\alpha$ with $D_N = \mathcal{O} \left(\left(\frac{1}{N}\right)^{\frac{1}{2}}\right)$. See \cite{Koro}. The most prominent normal number, the Champernowne number $\alpha$ is of rather bad quality. We have
$$
D_N \geq c\cdot \frac{1}{\log N}
$$
for infinitely many $N$. See for example \cite{Schiff}.\\

Nevertheless there still is a gap of one $\log N$-factor between the best known example of Levin and the currently best known lower bound for $D_N$. So the  main and certainly challenging question is, if either the upper or the lower bound (or both bounds) for the discrepancy of normal numbers can be improved. The first idea in an attempt to improve the upper bound could be to try to improve the discrepancy estimate given by Levin for his normal number $\alpha$. The aim of this paper is, to show that this attempt has to fail, since we will show
\begin{theorem}
Let $\alpha$ be Levin's normal number in base 2. (For the exact definition of $\alpha$ see Section~\ref{sect_b}.) Let $D_N$ be the discrepancy of the sequence $(\left\{2^n \alpha\right\})_{n = 0, \ldots, N-1}$. Then there is a positive constant $c$ such that
$$
D_N \geq c \cdot \frac{\left(\log N\right)^2}{N}
$$
for infinitely many $N$.
\end{theorem}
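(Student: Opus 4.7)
The plan is to exploit the specific structure of Levin's construction (Section~\ref{sect_b}) to locate, for infinitely many $N$, a cluster of about $\log N$ positions $m_1<m_2<\cdots<m_r$ in $[0,N)$ at which the binary expansion of $\alpha$ carries a zero-run of length at least $L$, with $L$ of order $\log N$; equivalently, $\{2^{m_j}\alpha\}\in[0,2^{-L})$ for every $j$. One then cascades, exactly as in the one-$\log$ argument of the introduction: from $\{2^{m_j}\alpha\}\in[0,2^{-L})$ one deduces $\{2^{m_j+i}\alpha\}\in[0,2^{-(L-i)})$ for every $0\leq i<L$, so a long string of iterates after each seed is pinned in a very small initial interval of $[0,1)$. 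The point is that running the cascade argument in parallel across $r\sim\log N$ seeds should upgrade the resulting discrepancy bias from $\log N$ to $(\log N)^{2}$.

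Quantitatively, set $L'=\lfloor\log_{2}N\rfloor$, $L=2L'$, and consider the test interval $J=[0,2^{-L'})$. Each seed $m_j$ contributes the $L-L'+1\geq L'$ consecutive iterates $m_j,m_j+1,\ldots,m_j+L-L'$, all of which lie in $J$. If the $m_j$ can be chosen pairwise separated by more than $L-L'$ — which is easy to arrange whenever $rL\ll N$, and which I would extract from Levin's construction as part of the combinatorial step below — these contributions are disjoint, and
\[
\#\{n<N+L:\{2^{n}\alpha\}\in J\}\;\geq\;r(L-L'+1)\;\geq\;c(\log N)^{2}.
\]
Because $2^{-L'}\sim 1/N$, the ``expected'' count of iterates in $J$ among the first $N+L$ indices is only $O(1)$, so the single test interval $J$ already forces $(N+L)D_{N+L}\geq c(\log N)^{2}$ and the theorem follows.

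The main obstacle, and the real content of the proof, is the combinatorial lemma behind the first paragraph: for infinitely many $k$, the first $2^{k}$ binary digits of $\alpha$ must contain at least $ck$ pairwise disjoint runs of at least $2k$ consecutive zeros, distributed through $[0,2^{k})$ with mutual gaps exceeding $k$. To establish this one has to unwind Levin's recursive definition stage by stage: Levin assembles $\alpha$ by concatenating at each dyadic stage a block derived from an equidistributed $(t,m,s)$-net-like building block chosen to enforce equidistribution at the corresponding scale. The delicate point is that the very mechanism Levin uses to \emph{achieve} the $\mathcal{O}((\log N)^{2}/N)$ upper bound — the synchronized equidistribution at all dyadic scales up to $2^{-k}$ — inevitably forces many long, well-separated zero-runs in the first $2^{k}$ digits, for otherwise the construction itself would have to violate equidistribution at some intermediate scale. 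This ``zero-run production'' cannot be read off from the upper bound alone and has to be extracted directly from Levin's definition in Section~\ref{sect_b}; once it is secured, the cascade accounting above completes the proof.
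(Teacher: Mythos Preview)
Your proposal has a genuine gap, and the heuristic you invoke to bridge it is pointing in the wrong direction.

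The entire argument rests on the ``combinatorial lemma'': that the first $2^k$ digits of Levin's $\alpha$ contain at least $ck$ pairwise well-separated zero-runs of length $\geq 2k$. You do not prove this; you argue that Levin's equidistribution mechanism ``inevitably forces'' many long zero-runs. This is backwards. Good equidistribution \emph{limits} long zero-runs rather than producing them: for random bits one expects about $N\cdot 2^{-L}$ zero-runs of length $L$ among the first $N$ digits, so with $L=2\lfloor\log_2 N\rfloor$ the expected number is $O(1/N)$, essentially none. Your scheme needs roughly $\log N$ such runs, vastly more than equidistribution would permit, and indeed more than Levin's $\alpha$ actually has.

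Concretely, look at block $\mathcal{A}_m$: it consists of $2^{2^m}$ mini-blocks of length $2^m$, the $n$-th being $(d_0(n),\ldots,d_{2^m-1}(n))=P_m\cdot(e_0(n),\ldots,e_{2^m-1}(n))^T$ over $\mathbb{Z}_2$. Since $P_m$ is regular, a full mini-block of zeros occurs only for $n=0$; a zero-run of length $2\cdot 2^m$ would require two consecutive mini-blocks to vanish, which is impossible. So within $\mathcal{A}_m$ (where $\log N\asymp 2^m$) there are \emph{no} zero-runs of length $2\log_2 N$ at all, and your parameters $L=2L'$ simply fail. If you retreat to $L'=\log_2 N$ and count hits in $J=[0,2^{-L'})$ directly, Lemma~\ref{lem_b} (with $c=0$, which is never the exceptional value there since the solution of the homogeneous system is $(0,\ldots,0)$, not $(1,\ldots,1\mid 0,\ldots,0)$) shows that the number of positions in $\mathcal{A}_m$ at which a zero-run of length $i<2^m$ begins matches the expected count $2^{m+2^m-i}$ \emph{exactly}. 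There is no surplus to harvest.

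The paper's proof proceeds along entirely different lines. It does not look for zero-runs or any single dyadic interval; instead it builds, for each large $m$, a specific $N=n_m+2^m\sum_{l=0}^{M}2^{w_l}$ and a telescoping interval $J=J_M\cup\cdots\cup J_0$ assembled from pieces of half-integer dyadic length at decreasing scales $2^{-w_l}$. The crucial input is an algebraic identity for the Pascal matrix (Proposition~\ref{prop_a}): the vector $c_{k+t,t}A_{k,t}^{-1}$ is independent of $k$, and the residual bit $d_{k+t,t}-c_{k+t,t}A_{k,t}^{-1}B_{k,t}$ applied to a structured $0/1$ vector equals $\binom{\lfloor(k+t)/8\rfloor+1+v}{v}+1\pmod 2$. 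This lets one choose each $J_l$ (of length $\tfrac12\cdot 2^{-w_l}$ or $\tfrac32\cdot 2^{-w_l}$) so that the points $x_{n,k}$ with $n\in\mathcal{B}_l$ land in the ``favourable'' half with a bias of order $2^m$; summing over $M+1\asymp 2^m$ blocks yields a total surplus of order $2^{2m}\asymp(\log N)^2$. The $(\log N)^2$ here is produced by coherent biases across $\sim\log N$ scales, each worth $\sim\log N$, coming from the specific self-similar structure of $P$ --- not from any clustering of zeros in $\alpha$.
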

So the \textbf{main question} remains open:\\
What is the best possible order of normality in base 2 i.e., what is the smallest possible order of the discrepancy $D_N$ of sequences of the form $\left\{2^n \alpha\right\}_{n = 0,1,\ldots}$. Is it $\frac{\log N}{N}$, or $\frac{\left(\log N\right)^2}{N}$, or something in between?

\section{Levin's normal number $\alpha$ and two auxiliary results} \label{sect_b}

Levin's normal number $\alpha$ in base 2 is defined as follows: We denote the representation of $\alpha$ in base 2 by\\

$\alpha = 0 . \underset{\mathcal{A}_1}{\underbrace{\alpha_1\alpha_2 \ldots \alpha_8}} ~ \underset{\mathcal{A}_2}{\underbrace{\alpha_9 \ldots \alpha_{72}}} \quad\ldots \quad \underset{\mathcal{A}_m}{\underbrace{\ldots\ldots}} \quad \ldots\ldots$ .\\

Here the blocks $\mathcal{A}_m$ consist of $2^m \cdot 2^{2^{m}}$ digits $\alpha_i$ for $m = 1,2, \ldots$.\\

We set $n_1 := 0$ and $n_m := 2^1 \cdot 2^{2^{1}} + 2^2 \cdot 2^{2^{2}} + \ldots + 2^{m-1} \cdot 2^{2^{m-1}}$ for $m = 2,3, \ldots$. Then block $\mathcal{A}_m$ starts with $\alpha_{n_m +1}$. The block $\mathcal{A}_m$ is of the form

$$
\underbrace{d_0 (0) \ldots d_k(0) \ldots d_{2^m-1}(0)} \quad \underbrace{d_0(1) \ldots d_k(1) \ldots d_{2^m-1}(1)} \quad \ldots
$$
$$
\ldots \quad \underbrace{d_0(n) \ldots d_k(n) \ldots d_{2^m-1}(n)} \quad\ldots \quad\underbrace{d_0\left(2^{2^{m}}-1\right) \quad \ldots \quad d_{2^m-1}\left(2^{2^{m}}-1\right)}.
$$

For $n$ between $0$ and $2^{2^{m}}-1$ we set $n:= e_0(n) + 2 \cdot e_1(n) + \ldots + 2^{2^{m}-1} \cdot e_{2^m-1} (n)$.\\

Then $d_k(n) := p_{k,0} e_0(n) + \ldots + p_{k,2^m-1}e_{2^m-1}(n) \mod2$, with $p_{i,j} := \binom{i+j}{j} \mod2$ for all non-negative integers $i$ and $j$. \\
We define the $\mathbb{N}_0 \times \mathbb{N}_0$ - matrix $P$ as
$$
P:= \left(p_{ij}\right)_{i, j=0,1,\ldots} = 
\renewcommand*{\arraystretch}{1.5}\begin{pmatrix} 
\binom{0}{0} & \binom{1}{1} & \binom{2}{2} & \binom{3}{3} & \ldots \\
\binom{1}{0} & \binom{2}{1} & \binom{3}{2} & \ldots \\
\binom{2}{0} & \binom{3}{1} & \ldots \\
\binom{3}{0} & \ldots \\
\vdots \end{pmatrix}\pmod{2}.
$$
Levin in Theorem 2 in \cite{Lev} has shown, that for this $\alpha$ for the discrepancy $D_N$ of the sequence $\left\{2^n \alpha\right\}_{n = 0,1,\ldots}$ we have $D_N = \mathcal{O} \left(\frac{\left(\log N\right)^2}{N}\right)$.\\

We will have to use this upper bound for $D_N$ also in our proof of our lower bound for $D_N$. Further we will need two auxiliary results.\\

First, we will use the second result (formula (55)) in Corollary 2 in \cite{Lev}. This is 
\begin{lemma} \label{lem_a}
For every $m$ and every $\gamma$ with $0 \leq \gamma < 1$ we have
$$
\# \left\{n_m \leq n < n_m +2^m \cdot 2^{2^{m}} \left|\left\{2^n \alpha\right\} \in [0,\gamma)\right\}\right. = \gamma \cdot 2^m \cdot 2^{2^{m}} + \varepsilon \cdot 2^m
$$
with some $\varepsilon$ with $\left|\varepsilon\right|< 5$. (This $\varepsilon$ here and in the following, is not a constant but denotes a variable with bounded absolute value!)
\end{lemma}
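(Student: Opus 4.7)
The plan is to approximate $[0,\gamma)$ by a union of fundamental dyadic intervals of length $2^{-2^m}$ and then to count, for each $2^m$-bit pattern $c$, the number $N_c$ of indices $i\in[0,N)$ (with $N=2^m\cdot 2^{2^m}$) such that the $2^m$ bits $\alpha_{n_m+i+1},\ldots,\alpha_{n_m+i+2^m}$ equal $c$; equivalently, $\{2^{n_m+i}\alpha\}$ lies in the dyadic interval $I_c=[0.c,\,0.c+2^{-2^m})$. Writing $\gamma=0.c^*+\delta$ with $c^*\in\{0,1\}^{2^m}$ and $0\le\delta<2^{-2^m}$, the desired count differs from $\sum_{0.c<0.c^*}N_c$ by at most $N_{c^*}$, and $\gamma\cdot N$ differs from $\lfloor\gamma\cdot 2^{2^m}\rfloor\cdot 2^m$ by less than $2^m$; hence the lemma will follow once I show $N_c=2^m+O(1)$ for every $c$ with a uniform absolute $O(1)$ term.

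To establish this, I would decompose $i=2^m s+t$ with $0\le s<2^{2^m}$ and $0\le t<2^m$. The window of $2^m$ consecutive bits starting at position $n_m+i+1$ then equals
$$
W(s,t)=\bigl(d_t(s),\ldots,d_{2^m-1}(s),\,d_0(s+1),\ldots,d_{t-1}(s+1)\bigr),
$$
where $D(s):=(d_0(s),\ldots,d_{2^m-1}(s))^T=P_m E(s)$, $E(s)$ is the column of binary digits of $s$, and $P_m=(p_{ij})_{0\le i,j<2^m}$ is the leading $2^m\times 2^m$ submatrix of $P$. For $s=2^{2^m}-1$ the bits indexed by $s+1$ come from the $n=0$ sub-block of $\mathcal{A}_{m+1}$, which is the zero vector regardless of which matrix is used; hence the count agrees with the cyclic convention $s+1\equiv 0\pmod{2^{2^m}}$ and no boundary correction is needed. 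For $t=0$ the map $s\mapsto W(s,0)=P_m E(s)$ is immediately a bijection of $\{0,\ldots,2^{2^m}-1\}$ onto $\{0,1\}^{2^m}$, since $P_m$ is invertible over $\mathbb{F}_2$ (classical: $\det P_m=1$). The goal is to prove the same bijectivity for each fixed $t>0$, so that summation over $t$ yields $N_c=2^m$ exactly.

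For $t>0$, I would split $P_m$ into its top $t$ rows $U$ and bottom $2^m-t$ rows $L$, and split $c$ into its first $2^m-t$ bits $c^{(1)}$ and its last $t$ bits $c^{(2)}$. The condition $W(s,t)=c$ then becomes $LE(s)=c^{(1)}$ together with $UE(s+1)=c^{(2)}$. Using $E(s+1)=E(s)+\Delta(s)\pmod 2$, where $\Delta(s)$ has ones exactly in positions $0,\ldots,\tau(s)$ with $\tau(s)$ the number of trailing ones of $s$ (taken cyclically for $s=2^{2^m}-1$), this system rewrites as $P_m E(s)=(c^{(2)}+U\Delta(s),\,c^{(1)})^T$. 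Invertibility of $P_m$ gives, for each of the $2^m$ possible shapes $\Delta_k$ of $\Delta$ (ones in positions $0,\ldots,k-1$), a uniquely determined candidate $E^\ast_k:=P_m^{-1}(c^{(2)}+U\Delta_k,\,c^{(1)})^T$. A genuine solution requires $E^\ast_k$ to have exactly $k-1$ trailing ones, and the bijectivity reduces to the claim that precisely one $k\in\{1,\ldots,2^m\}$ satisfies this self-consistency.

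This self-consistency check is what I expect to be the \emph{main obstacle}: one must track how the nonlinear carry structure of $s\mapsto s+1$ interacts with the triangular structure of $P_m^{-1}$ coming from the Pascal recursion $\binom{i+j}{j}\equiv\binom{i+j-1}{j-1}+\binom{i+j-1}{j}\pmod 2$, in order to guarantee exactly one consistent $k$. A small-case verification (e.g.\ $m=1,2$) confirms that the map $s\mapsto W(s,t)$ is indeed a bijection for every $t$, which makes the claim plausible. Once $N_c=2^m$ has been secured, combining with the reduction from the first paragraph yields the stated estimate $\gamma\cdot 2^m\cdot 2^{2^m}+\varepsilon\cdot 2^m$ with $|\varepsilon|$ bounded by an absolute constant less than $5$, absorbing both the dyadic-approximation discrepancy and any residual boundary artefacts.
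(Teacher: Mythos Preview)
The paper does not prove this lemma at all; it simply quotes formula~(55) of Corollary~2 in Levin~\cite{Lev}. Your direct approach is sound: the reduction to showing $N_c=2^m$ via dyadic approximation is correct, as is the boundary observation that the first $2^{m+1}$ bits of $\mathcal{A}_{m+1}$ are all zero, making the cyclic convention exact. The only genuine gap is the one you flag yourself, the self-consistency of the carry shape.

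That gap closes once you stop inverting the full $P_m$ for each candidate $\Delta_k$ and instead exploit the upper-left-triangular structure of $P_m$ (namely $p_{i,j}=0$ whenever $i+j\ge 2^m$). With this, the system $LE(s)=c^{(1)}$ involves \emph{only} the low digits $e_0(s),\ldots,e_{2^m-t-1}(s)$, and the corresponding $(2^m{-}t)\times(2^m{-}t)$ block is regular, so the low digits of $s$ are determined first. This fixes the low digits of $s+1$; substituting them into $UE(s+1)=c^{(2)}$ leaves a regular square system for $e_{2^m-t}(s+1),\ldots,e_{2^m-1}(s+1)$, and undoing the carry then recovers the high digits of $s$ uniquely. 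The only borderline situation---all low digits of $E(s)$ equal to~$1$ and all high digits of $E(s+1)$ equal to~$0$---yields exactly $s=2^{2^m}-1$, handled by your cyclic reading. This two-stage argument is precisely what the paper carries out in its proof of Lemma~\ref{lem_b}, specialised to $i=2^m$; in that specialisation the correction terms $\tau_j$ all vanish, so there is no exceptional~$\tilde{c}$ and the bijection is exact, giving $N_c=2^m$ on the nose and hence $|\varepsilon|<2$, well within the stated bound.
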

Further we will use the following sharper version of Lemma 5 in \cite{Lev}.

\begin{lemma} \label{lem_b}
For every positive integer $m$ we have: For every $0 \leq i < 2^m$, every integer $B$ with $0 \leq B < 2^{2^{m}-i}$, and for every integer $c$ with $0 \leq c < 2^{i}$ with the exception of at most $2^{m+1}$ such $c$ we have
$$
\mathcal{N} := \# \left\{0 \leq k < 2^m, B \cdot 2^{i} \leq n < B \cdot 2^{i} + 2^{i} \left|\left\{2^{n_m + {2^m} n+k} \cdot \alpha\right\}\in \left[\frac{c}{2^{i}}, \frac{c+1}{2^{i}}\right)\right\}\right. = 2^m.
$$
\end{lemma}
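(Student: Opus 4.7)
Using Levin's explicit formula $d_k(n)=\sum_{j=0}^{2^m-1}p_{k,j}e_j(n)\pmod 2$, I first re-express the first $i$ binary digits of $\{2^{n_m+2^m n+k}\alpha\}$ as a vector $g_k(n)\in\{0,1\}^i$: the $s$-th bit of $g_k(n)$ equals $d_{k+s}(n)$ if $k+s<2^m$, and $d_{k+s-2^m}(n+1)$ otherwise. The membership condition defining $\mathcal N$ then becomes ``$g_k(n)$ equals the binary expansion of $c$''. As $n$ runs over $[B\cdot 2^i,(B+1)\cdot 2^i)$, the high bits $e_i(n),\dots,e_{2^m-1}(n)$ are fixed by $B$, while the low bits $(e_0(n),\dots,e_{i-1}(n))$ range over all of $\{0,1\}^i$; hence the problem reduces to counting preimages of $c$ under $2^m$ maps $g_k\colon\{0,1\}^i\to\{0,1\}^i$.

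\textbf{The non-spillover case ($k+i\leq 2^m$).} Here all bits of $g_k(n)$ come from row $n$, so $g_k$ is affine: $g_k(n)=M_k(e_0(n),\dots,e_{i-1}(n))^T+\mathrm{const}(B,k)$ with $M_k:=(p_{k+s,j})_{0\leq s,j<i}$, a contiguous $i\times i$ block of the Pascal matrix $P\pmod 2$. The plan is to prove that every such $M_k$ is invertible over $\mathbb F_2$. A clean argument uses generating functions: row $r$ of $P$ has generating series $(1+x)^{-(r+1)}\in\mathbb F_2[[x]]$, so modulo $x^i$ the rows $k,k+1,\dots,k+i-1$ are obtained from a single unit by successive multiplication by the unit $(1+x)^{-1}$, and their linear independence follows because $1-(1+x)^{-1}=x(1+x)^{-1}$ has $x$-adic valuation exactly $1$. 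Consequently, for each non-spillover $k$ and each $c$, exactly one $n$ in the range satisfies $g_k(n)=c$, giving a contribution of $1$ to $\mathcal N(c)$, for a total non-spillover contribution of $2^m-i+1$.

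\textbf{The spillover case and aggregation.} There are $i-1$ spillover values, $k=2^m-i+1,\dots,2^m-1$. For these, $g_k(n)$ mixes digits of $n$ and of $n+1$; since the binary digits of $n+1$ depend non-linearly on those of $n$ through carries, the map $g_k$ is no longer affine. The key lemma to establish is that, for each spillover $k$, the map $\nu\mapsto g_k(\nu)$ with $\nu:=n-B\cdot 2^i$ hits at least $2^i-1$ distinct values in $\{0,1\}^i$, missing at most one value and repeating at most one. This is where the main obstacle lies: one has to track the interaction between the Pascal-matrix formula and the carry pattern of $n\mapsto n+1$, including the boundary case $\nu=2^i-1$ where the high bits of $n+1$ jump from $B$ to $B+1$. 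A natural approach is to stratify the $\nu$'s by the number of their trailing $1$-bits (which governs the carry), verify bijectivity on each stratum using invertibility of appropriate submatrices of $P$ obtained as in the non-spillover step, and absorb the single collision arising at the carry boundary. Granting this lemma, each spillover $k$ contributes $1$ to $\mathcal N(c)$ for all but at most two values of $c$, so the total count of bad $c$'s is at most $2(i-1)<2^{m+1}$, giving $\mathcal N(c)=2^m$ outside an exceptional set of size $\leq 2^{m+1}$, as required.
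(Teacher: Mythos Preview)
Your decomposition into non-spillover ($k+i\le 2^m$) and spillover ($k+i>2^m$) cases matches the paper exactly, and your generating-function argument for the invertibility of $M_k$ is a clean self-contained alternative to the paper's citation of Levin's Lemma~4.

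The spillover case, however, is where the real work lies, and here there is a genuine gap. You correctly isolate the needed lemma (each spillover $g_k$ misses at most one value of $c$), but your suggested stratification by the number of trailing $1$-bits of $\nu$ does not lead anywhere as stated: on the stratum with exactly $t$ trailing ones the free coordinates are $e_{t+1}(n),\dots,e_{i-1}(n)$, so $g_k$ restricted to that stratum is a map $\{0,1\}^{i-t-1}\to\{0,1\}^i$, which cannot be a bijection. Gluing these strata into a near-bijection on all of $\{0,1\}^i$ would require extra structure that your sketch does not supply.

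What the paper actually uses---and what is missing from your plan---is the following structural fact about $P$: the upper-left $2^m\times 2^m$ block of $P$ satisfies $p_{r,j}=0$ whenever $r+j\ge 2^m$. Setting $a:=2^m-k$, this forces the first $a$ output bits $d_k(n),\dots,d_{2^m-1}(n)$ to depend \emph{only} on $e_0(n),\dots,e_{a-1}(n)$ (and not on $e_a(n),\dots,e_{i-1}(n)$ or on the bits of $B$); in your language, the upper-right block of the linear system vanishes. Since the corresponding $a\times a$ Pascal block is invertible, the digits $c_0,\dots,c_{a-1}$ determine $e_0(n),\dots,e_{a-1}(n)$ uniquely. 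These in turn determine $e_0(n+1),\dots,e_{a-1}(n+1)$ via the carry, after which the remaining $i-a$ equations form an invertible affine system in $e_a(n+1),\dots,e_{i-1}(n+1)$. One then recovers $e_a(n),\dots,e_{i-1}(n)$ by decrementing, and this succeeds except possibly for the single $c$ whose solution has low block $(1,\dots,1)$ and high block $(0,\dots,0)$ (the full-carry configuration $\nu=2^i-1$). That one potentially missed $c$, together with the one potentially doubly-hit $c$, gives the ``at most two bad $c$'s per spillover $k$'' that you need. This decoupling via the vanishing block is the missing idea.
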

\begin{proof}
Each $n$ with $B \cdot 2^{i} \leq n < B \cdot 2^{i} + 2^{i}$ can be uniquely represented in the form
$$
n = e_0(n) + e_1(n) \cdot 2 + \ldots + e_{i-1}(n) \cdot 2^{i-1} + b_0 \cdot 2^{i} + b_1 \cdot 2^{i+1} + \ldots + b_{2^m -i-1} \cdot 2^{2^{m}-1}.
$$
Let $c:= c_0 + c_1 \cdot 2 + \ldots + c_{i-1} 2^{i-1}$. Fix a $k$ with $0 \leq k < 2^m$. Then $\left\{2^{n_m + {2^m} n+k} \alpha\right\} \in \left[\frac{c}{2^{i}}, \frac{c+1}{2^{i}}\right)$ is equivalent with\\

\textbf{Case 1:} If $k+i \leq 2^m$:
\begin{eqnarray} \label{equ_c}
&& d_k(n) = c_0 \\ \nonumber
&& d_{k+1}(n) = c_1 \\ \nonumber
&& \quad \vdots \\ \nonumber
&& d_{k+i-1}(n) = c_{i-1} 
\end{eqnarray}
respectively \\
\textbf{Case 2:} If $k+i > 2^m$:
\begin{eqnarray} \label{equ_d}
&& d_k(n) = c_0 \\ \nonumber
&& d_{k+1}(n) = c_1 \\ \nonumber
&& \quad \vdots \\ \nonumber
&& d_{2^m-1} (n) = c_{2^m-k-1} \\ \nonumber
&& d_0 (n+1) = c_{2^m-k} \\ \nonumber
&& \quad \vdots \\ \nonumber
&& d_{i+k-2^m-1} (n+1) = c_{i-1}.
\end{eqnarray}
Let us consider first Case 1. The system \eqref{equ_c} is equivalent with
$$
p_{k,0} e_0(n) + \ldots + p_{k, i-1} e_{i-1}(n) + \beta_k = c_0
$$
$$
\vdots
$$
$$
p_{k+i-1,0} e_0(n) + \ldots + p_{k+i-1, i-1} e_{i-1}(n) + \beta_{k+i-1} = c_{i-1}
$$
with $\beta_{k+j} := p_{k+j, i} \cdot b_0 + \ldots + p_{k+j, 2^m-1} \cdot b_{{2^m}-i-1}$. The matrix $\left(p_{k+j, u}\right)_{j, u = 0, \ldots, i-1}$ is regular (see Lemma 4 in \cite{Lev}), hence for each choice of $c$ and every such $k$ there is exactly one $n$ such that $\left\{2^{n_m+{2^m} n+k}\alpha\right\} \in \left[\frac{c}{2^{i}}, \frac{c+1}{2^{i}}\right)$.\\

Let us consider now Case 2. This case is more delicate since then the system contains now also variables $e_0(n+1), \ldots, e_{i-1}(n+1)$. It will turn out that this does not make any problem, but only in one case, namely if $n=2^{i}-1$. In this case we also have to take into account that then also the digit $e_i(n+1)=1$ will appear. But here once more we have to be careful: If $i=2^m$, then the ``$n+1$'' in this system is not equal to $2^{2^{m}}$ but it equals 0.\\

To handle the now relevant system (see below) we will make use of the following special form of the matrix $P$: \\
$P$ is generated by starting with the $1\times 1$-matrix $A_0 = (1)$, and then by successively carrying out the transformation $A_m \rightarrow A_{m+1} := \begin{pmatrix} A_m & A_m \\ A_m & 0 \end{pmatrix}.$ \\
I.e., 
$$
(1) \rightarrow \begin{pmatrix} 1 & 1 \\ 1 & 0 \end{pmatrix} \rightarrow \begin{pmatrix} 1 & 1 & 1 & 1 \\ 1 & 0 & 1 & 0 \\ 1 & 1 & 0 & 0  \\1 & 0 & 0 & 0 \end{pmatrix} \rightarrow \ldots .
$$
Hence the left upper $2^m \times 2^m$ - submatrix $A_m$ of $P$ is a left upper triangle matrix. Hence, for $0 \leq i, j < 2^m$ we have $p_{i,j} =0$ whenever $i+j \geq 2^m$. The system which we have to deal with, now is of the form:\\
$$
\left(\begin{tabular}{l|l} A & B \\ \hline C & D \end{tabular} \right) = \left(\begin{tabular}{l} $c_0$ \\ $\vdots$ \\ $c_{2^m-k-1}$ \\ \hline $c_{2^m-k}$ \\ \vdots \\ $c_{i-1}$ \end{tabular} \right)
$$
where
{\scriptsize{$$
A:= \left(\begin{tabular}{c} $p_{k,0} e_0(n) + \ldots + p_{k,2^m-k-1} e_{2^m-k-1}(n)$ \\ \vdots \\ $p_{2^m-1, 0}e_0(n) + \ldots + p_{2^m-1, 2^m-k-1}e_{2^m-k-1}(n)$ \end{tabular}\right),
$$
~\

$$
B:= \left(\begin{tabular}{c} $+~ p_{k,2^m-k} e_{2^m-k}(n) + \ldots + p_{k,i-1} e_{i-1} (n) + \beta_k$ \\ \vdots \\ $+~p_{2^m-1, 2^m-k} e_{2^m-k}(n) + \ldots + p_{2^m-1, i-1} e_{i-1}(n) + \beta_{2^m-1} $ \end{tabular}\right),
$$
~\

$$
C:= \left(\begin{tabular}{c} $p_{0,0}e_0 (n+1) + \ldots + p_{0,2^m-k-1}e_{2^m-k-1}(n+1)$ \\ \vdots \\ $p_{i+k-2^m-1,0}e_0 (n+1) + \ldots + p_{i+k+2^m-1, 2^m-k-1}e_{2^m-k-1}(n+1)$ \end{tabular}\right),
$$
~\

$$
D:= \left(\begin{tabular}{c} $+~p_{0,2^m-k}e_{2^m-k}(n+1) + \ldots + p_{0,i-1}e_{i-1}(n+1) + \beta_0 + \tau_0$ \\ \vdots \\ $+~p_{i+k-2^m-1, 2^m-k} e_{2^m-k}(n+1) + \ldots + p_{i+k-2^m-1, i-1} e_{i-1} (n+1) + \beta_{i+k-2^m-1} + \tau_{i+k-2^m-1}$\end{tabular}\right).
$$ }}

Here $\tau_j := \begin{cases} p_{j,i} &\text{if}~ n = 2^{i}-1 ~\text{with}~i \neq 2^m, \\ 0 &\text{otherwise.}\end{cases}$\\

By the property of $P$ pointed out above, we have $B\equiv 0$. The matrix defining part $A$ is regular (see Lemma 4 in \cite{Lev}). Hence $e_0(n), \ldots, e_{2^m-k-1}(n)$ are uniquely determined by the upper part of the system. Hence also part $C$ is determined. Say, 
$$
C:=\left(\begin{tabular}{l} $\delta_0$ \\ $\vdots$ \\ $\delta_{i+k-2^m-1}$ \end{tabular}\right).
$$
That is, we arrive at the system
$$
p_{0,2^m-k} e_{2^m-k}(n+1) + \ldots + p_{0,i-1}e_{i-1}(n+1) + \delta_0 + \beta_0 + \tau_0 = c_{2^m-k}
$$
$$
\vdots
$$
$$
p_{i+k-2^m-1, 2^m-k}e_{2^m-k}(n+1) + \ldots + p_{i+k-2^m-1, i-1} e_{i-1}(n+1) + \delta_{i+k-2^m-1} + 
$$
$$
+ \beta_{i+k-2^m-1} + \tau_{i+k-2^m-1} = c_{i-1}
$$
The sub-matrix of $P$ defining this system (again by Lemma 4 in \cite{Lev}) is regular.\\

Consider now the whole system $\left(\begin{tabular}{l|l} A & B \\ \hline C & D \end{tabular} \right) = c$ first without the entries $\tau_j$. %If the unique solution $n = e_0(n) + 2 \cdot e_1(n) + \ldots + 2^{i-1} \cdot e_i (n)$ is different from $n=2^{i}-1$, then this $n$ is also a solution of the system with the entries $\tau_j$.\\
%Only in the case that $i \neq 2^m$ and $c$ is such that the system without the $\tau_j$ has as solution $n = 2^{i}-1$, only in this case it could be that the whole system with the entries $\tau_j$ has no solution, i.e., that there is no element $\left\{2^{n_m +2^m n+k} \alpha\right\}$ in $\left[\frac{c}{2^{i}}, \frac{c+1}{2^{i}} \right)$. Consequently also at most one of the intervals $\left[\frac{c}{2^{i}}, \frac{c+1}{2^{i}}\right)$ contains more than one of the points $\left\{2^{n_m + {2^m}{n+k}} \alpha\right\}$. This holds for every $k = 0,1,\ldots,2^m-1$ and so the result follows.
As we have pointed out above, this system has a unique solution, say $(e_0,e_1,\ldots,e_{2^m-k-1},f_{2^m-k},\ldots,f_{i-1})$, where $e_0,e_1,\ldots,e_{2^m-k-1}$ are determined by the upper part of the system, and $e_j=e_j(n)$ for $j=0,1,\ldots,2^m-k-1$. 

If now (Case 2.1)
$$(e_0,e_1,\ldots,e_{2^m-k-1})\neq(1,1,\ldots,1),$$
then certainly $n\neq 2^i-1$, hence all $\tau_j=0$, and $f_j=e_j(n+1)=e_j(n)$ for $j=2^m-k,\ldots,i-1$ gives the unique solution $n$. 

If (Case 2.2) 
$$(e_0,e_1,\ldots,e_{2^m-k-1})=(1,1,\ldots,1)$$
and 
$$(f_{2^m-k},\ldots,f_{i-1})\neq(0,0,\ldots,0),$$
say
$$(e_0,e_1,\ldots,e_{2^m-k-1}|f_{2^m-k},\ldots,f_{i-1})=(1,1,\ldots,1|0,0,\ldots,0,1,\ldots),$$
then we set 
$$(e_0(n),e_1(n),\ldots,e_{2^m-k-1}(n)|e_{2^m-k}(n),\ldots,e_{i-1}(n)):=(1,1,\ldots,1|1,1,\ldots,1,0,\ldots).$$
The corresponding $n$ is different from $2^i-1$, hence $\tau_j=0$ for all $j$, and therefore this $n$ gives the unique solution of our system. 

Finally (Case 2.3), let $\tilde{c}$ be the unique integer such that the system (without the $\tau_j$) has the unique solution 
$$(e_0,e_1,\ldots,e_{2^m-k-1}|f_{2^m-k},\ldots,f_{i-1})=(1,1,\ldots,1|0,0,\ldots,0).$$
Only in this case, for this single $\tilde{c}$, it could happen that the system with the $\tau_j$ has no solution, i.e., that there is no element $\{2^{n_m+2^mn+k}\alpha\}$ in $\left[\left.\frac{\tilde{c}}{2^i},\frac{\tilde{c}+1}{2^i}\right)\right.$. Consequently also at most one of the intervals $\left[\left.\frac{c}{2^i},\frac{c+1}{2^i}\right)\right.$ with $c\neq \tilde{c}$ contains more than one of the points $\{2^{n_m+2^mn+k}\alpha\}$. This holds for every $k=0,1,\ldots,2^m-1$ and so the result follows. 
\end{proof}

\section{Some properties of the Pascal-matrix $P$} \label{sect_c}

We recall that the matrix $P$ (we will call it ``Pascal-matrix'') is of the form 
$$P= (p_{ij})_{i,j = 0,1,\ldots} = \left(\binom{i+j}{j} \mod 2\right)_{i,j = 0,1,\ldots}.$$

%Let $P_m := \left(p_{ij}\right)_{i,j = 0,\ldots,2^m-1}$ with $p_{ij} := \binom{i+j}{j} \mod 2$. 
Let $m\in\mathbb{N}$. For fixed $t$ with $1 \leq t \leq 2^m$ and arbitrary $0 \leq k < 2^m$ let 
$$
A_{k,t} := \begin{pmatrix} p_{k, 0} &\ldots& p_{k,t-1} \\ \vdots & & \vdots \\ p_{k+t-1, 0} &\ldots& p_{k+t-1, t-1} \end{pmatrix}
$$
and
$$
B_{k,t} :=  \begin{pmatrix} p_{k, t} &\ldots &p_{k,2^m-1} \\ \vdots && \vdots \\ p_{k+t-1, t} &\ldots& p_{k+t-1, 2^m-1} \end{pmatrix}.
$$
By Lemma 4 in \cite{Lev} the matrix $A_{k,t}$ always is regular in $\mathbb{Z}_2$. \\

For given $k,t$ like above let $$c_{k+t,t}:= \left(\begin{tabular}{c} $p_{k+t, 0} \ldots \ldots p_{k+t,t-1}$ \end{tabular}\right),\quad\quad d_{k+t,t}:= \left(\begin{tabular}{c} $p_{k+t, t} \ldots \ldots p_{k+t,2^m-1}$ \end{tabular}\right)$$ and $$\xi_t := \left(\binom{t}{0}, \binom{t}{1}, \ldots, \binom{t}{t-1}\right)\pmod{2}.$$ 
Furthermore, we define $\overline{0}=(0,0,0,0,0,0,0,0)^T$ and $\overline{1}=(1,0,0,0,0,0,0,0)^T$.
See Figure \ref{fig:Figure1} for an illustration.\\

 % and $\mathcal{M} := \begin{pmatrix} 0 & 1 & 0 & \ldots & \ldots & 0 \\ 0 & 0 & 1 & 0 & \ldots & 0 \\ \vdots & & & & & \vdots \\ 0 & 0 & 0 & \ldots & 0 & 1 \\ & & & \xi & & \end{pmatrix}$. 

\begin{figure}[h]
	\centering
		\includegraphics[width=0.80\textwidth]{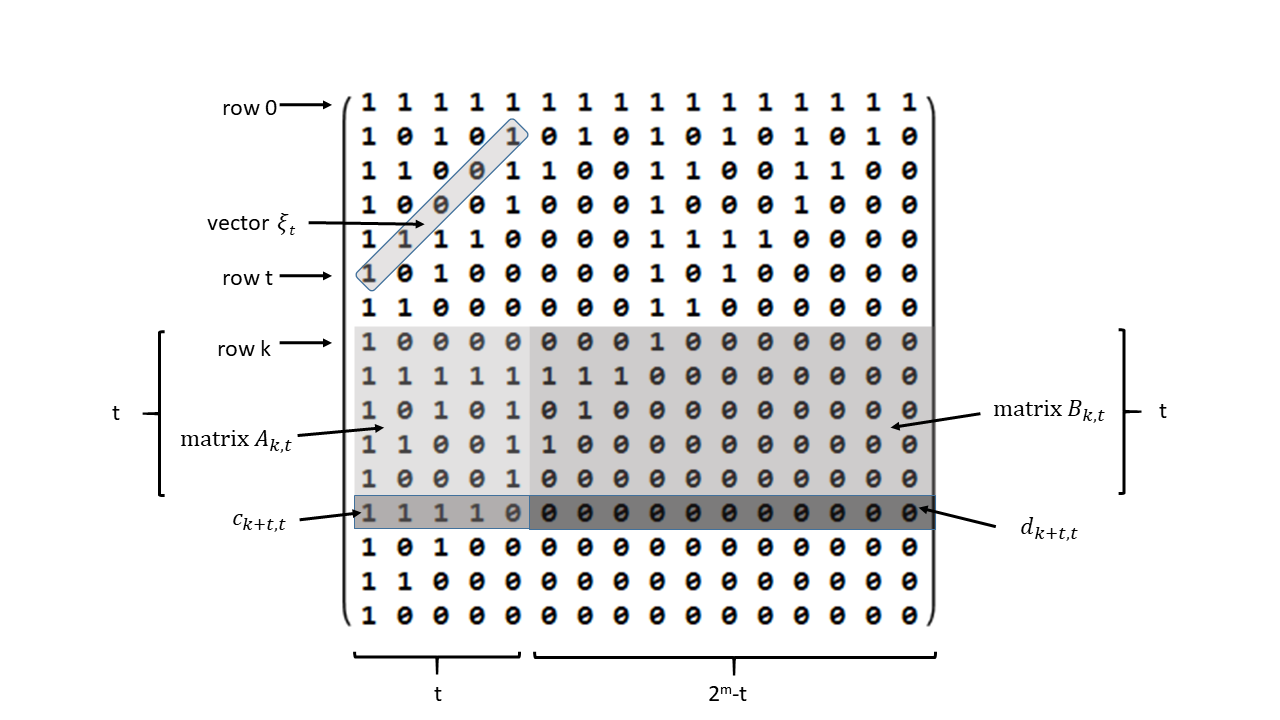}
	\caption{The magnitudes $\xi_t$, $A_{k,t}$, $B_{k,t}$, $c_{k+t,t}$, and $d_{k+t,t}$. }
	\label{fig:Figure1}
\end{figure}

\begin{lemma}\label{lem:1a}
%Let $U=(u_{i,j})_{i,j\geq 0}$ with $u_{i,j}=0$ if $i=0$. For $i\geq 1$ let $u_{i,j}=1$ whenever $j\geq i$ and $u_{i,j}=0$ else. 
%Then $$P\cdot U=\left(\binom{i+1+j}{j}+1\pmod{2}\right)_{i,j\geq 0}.$$
Let $i$ and $u\in\mathbb{N}_0$. Then $\sum_{j=1}^u\binom{i+j}{j}\equiv\binom{i+1+u}{u}+1\pmod{2}$. 
\end{lemma}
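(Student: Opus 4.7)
The plan is to recognize this as essentially the classical ``hockey stick'' (parallel summation) identity
$$\sum_{j=0}^{u}\binom{i+j}{j}=\binom{i+u+1}{u},$$
valid over $\Z$, and then to subtract the $j=0$ term and reduce modulo $2$. Indeed, once the hockey stick identity is in hand, the $j=0$ term is $\binom{i}{0}=1$, so
$$\sum_{j=1}^{u}\binom{i+j}{j}=\binom{i+u+1}{u}-1,$$
and since $-1\equiv 1\pmod{2}$ this gives exactly the claim.

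The only real work is justifying the hockey stick identity. The cleanest route in the present context is induction on $u$. The base case $u=0$ is vacuous (empty sum on the left, and the right-hand side is $\binom{i+1}{0}=1$ or, after rearrangement, $1$ on both sides). For the inductive step, assuming
$$\sum_{j=0}^{u}\binom{i+j}{j}=\binom{i+u+1}{u},$$
one adds $\binom{i+u+1}{u+1}$ to both sides and applies Pascal's rule
$$\binom{i+u+1}{u}+\binom{i+u+1}{u+1}=\binom{i+u+2}{u+1}$$
to conclude. This argument is over $\Z$ and hence a fortiori valid modulo $2$, so we may immediately specialize to the setting of the lemma.

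There is no real obstacle here: the statement is a one-line reformulation of the hockey stick identity reduced modulo $2$. The only thing to be careful about is the bookkeeping on the lower limit ($j=1$ versus $j=0$), which is precisely what produces the additive constant $1$ on the right-hand side of the lemma.
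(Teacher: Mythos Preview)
Your proof is correct and follows exactly the same approach as the paper: the paper's proof simply states that the lemma is an easy consequence of the hockey stick identity $\sum_{j=0}^u\binom{i+j}{j}=\binom{i+1+u}{u}$, which can be shown by induction on $u$. (One tiny wording slip: in your base case for the hockey stick identity the sum $\sum_{j=0}^{0}$ is not empty but equals $\binom{i}{0}=1$, matching $\binom{i+1}{0}=1$; the ``empty sum'' description applies to the lemma's sum $\sum_{j=1}^{0}$, not to the identity you are inducting on.)
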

\begin{proof}
This is an easy consequence of $\sum_{j=0}^u\binom{i+j}{j}=\binom{i+1+u}{u}$, which can be shown by induction on $u$. 
\end{proof}

\begin{lemma}\label{lem:1b}
Let $m>7$ and
$$D_m=\left(\binom{i+1+j}{j}+1\pmod{2}\right)_{2^{m-3}-\frac{14}{2^{4}} 2^{m-3}\leq i< 2^{m-3} ,0\leq j<\frac{1}{2^4} 2^{m-3}}.$$
The relative number of $1$s in $D_m$ is $\left(1-\left(\frac{3}{4}\right)^{m-7}\right)$.
\end{lemma}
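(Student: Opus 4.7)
The plan is to give the entries of $D_m$ a combinatorial meaning via Kummer's theorem and then reduce the claim to an elementary counting problem. Recall that $\binom{i+1+j}{j}$ is odd if and only if the base-$2$ addition $(i+1)+j$ has no carries, equivalently the binary representations of $i+1$ and $j$ have disjoint $1$-supports. Consequently, the $(i,j)$-entry of $D_m$ equals $0$ exactly when $i+1$ and $j$ are bit-disjoint, and $1$ otherwise. Counting the $1$s thus reduces to counting the bit-disjoint pairs $(i,j)$ in the given index rectangle, and then subtracting from the total.

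Setting $M := m-7$, the row range becomes $i \in [2^{M+1}, 2^{M+4})$, so $i+1 \in [2^{M+1}+1, 2^{M+4}]$, while the column range is $j \in [0, 2^M)$. Because $j < 2^M$, the disjointness condition depends only on the lowest $M$ bits of $i+1$; writing $a := (i+1) \bmod 2^M$, it becomes $a \wedge j = 0$. A short case split on whether $a = 0$ or $a \geq 1$ shows that every residue $a \in [0, 2^M)$ is hit by exactly $14$ values of $i+1$ in the row range: the high-bit parameter $b = (i+1-a)/2^M$ runs over $\{3,\dots,16\}$ when $a = 0$ and over $\{2,\dots,15\}$ when $a \geq 1$.

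Combined with the elementary count $\#\{(a,j)\in[0,2^M)^2 : a \wedge j = 0\} = 3^M$, which holds because at each of the $M$ bit positions exactly three of the four patterns $(a_k, j_k)$ are admissible, one obtains $14 \cdot 3^M$ bit-disjoint pairs $(i,j)$. The matrix has $14 \cdot 2^M$ rows and $2^M$ columns, hence $14 \cdot 4^M$ entries in total, and so the relative number of $0$s is $(3/4)^{m-7}$ and the relative number of $1$s is $1 - (3/4)^{m-7}$, as claimed.

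The only delicate point is the boundary accounting in the second step: the row range $[2^{M+1}+1, 2^{M+4}]$ is not a union of full dyadic blocks of length $2^M$, and one must check that the $+1$ shift at the lower endpoint and the inclusion of $2^{M+4}$ at the upper endpoint cancel so that each residue class modulo $2^M$ is represented by exactly $14$ elements. Once this bookkeeping is done, the product structure ``choice of low bits of $i+1$'' $\times$ ``choice of $j$'' decouples cleanly and the factor $3^M$ drops out immediately.
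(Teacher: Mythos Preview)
Your argument is correct and follows essentially the same route as the paper's proof: both reduce, via Lucas/Kummer, to the fact that $\binom{i+1+j}{j}$ is odd precisely when $i+1$ and $j$ are bit-disjoint, and then count $3^{m-7}$ such pairs out of $4^{m-7}$ in each $2^{m-7}\times 2^{m-7}$ block. The paper phrases the second step as ``$D_m$ is $14$ identical stacked copies of a $2^{m-7}\times 2^{m-7}$ block'' (which is literally true, since for $i=b\cdot 2^M+a$ the entry depends only on $(a+1)\bmod 2^M$ and $j$, independently of $b\in\{2,\dots,15\}$), whereas you phrase it as ``each residue $(i+1)\bmod 2^M$ is hit exactly $14$ times''; these are the same observation. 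Your explicit case split on $a=0$ versus $a\ge 1$ makes transparent the one boundary issue the paper leaves implicit, namely why the $+1$ shift does not spoil the block uniformity.
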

\begin{proof} Let $r\geq 1$. In the following we will sometimes use Lucas' Theorem which states the following: Let $i:=\sum_{l=0}^{r-1}i_l2^l$ and $j:=\sum_{l=0}^{r-1}j_l2^l$ with $i_l,j_l\in\{0,1\}$, then $$\binom{i}{j}\equiv \prod_{l=0}^{r-1}\binom{i_l}{j_l}\pmod{2}.$$
First we see from the self-similar structure in $P$, which is a consequence of Lucas' Theorem, that the number of $1$s in 
$$\left(\binom{i+1+j}{j}\mod 2\right)_{0\leq i<2^r,0\leq j<2^r}$$
equals the number of $1$s in 
$$\left(\binom{i+j}{j}\mod 2\right)_{0\leq i<2^r,0\leq j<2^r}.$$
The latter equals the number of $1$s in
$$\left(\binom{i}{j}\mod 2\right)_{0\leq i<2^r,0\leq j<2^r}.$$
This number can be computed as $2^r\cdot 2^r\cdot(3/4)^r$, since $\binom{i}{j}\equiv\prod_{l=0}^{r-1}\binom{i_l}{j_l}\pmod{2}$ and whenever $j_l\leq i_l$ for all $l$ then $\binom{i}{j}\equiv1\pmod{2}$, and $\binom{i}{j}\equiv0\pmod{2}$ else. 
The statement of the Lemma follows then again by the self-similar structure of $D_m$, that is built by a submatrix with $r=m-7$, as described above, stacked 14 times.

\end{proof}

\begin{lemma}\label{lem:1c}
We write $i=8\cdot \tilde{i}+i_0$, where $i_0\in\{0,1,\ldots,7\}$. 
Then 
\begin{align*}
\Big(\binom{i+j}{j}\Big)_{j\geq 0}\cdot\left(\begin{array}{c}\overline{0}\\\overline{1}\\\overline{1}\\\vdots\\\overline{1}\\\overline{1}\\\overline{0}\\\overline{0}\\\vdots\end{array}\right)\equiv\Big(\binom{\tilde{i}+j}{j}\Big)_{j\geq 0}\cdot\left(\begin{array}{c}{0}\\{1}\\1\\\vdots\\{1}\\1\\{0}\\0\\\vdots\end{array}\right)\pmod{2}
\end{align*}
 where the number of $\overline{1}$'s and $1$'s are equal. 
\end{lemma}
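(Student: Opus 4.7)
The plan is to exploit the very special shape of the block vectors $\overline{0}$ and $\overline{1}$ together with Lucas' theorem modulo $2$. The key observation is that $\overline{1}=(1,0,0,0,0,0,0,0)^T$ has support only in its top entry, while $\overline{0}$ is zero. Therefore, when we write the index $j$ of the row-vector $(\binom{i+j}{j})_{j\geq 0}$ as $j=8b+j_0$ with $j_0\in\{0,1,\ldots,7\}$, only the entries with $j_0=0$ can contribute to the dot product. In other words, writing $w_b=1$ when the $b$-th block in the LHS column is $\overline{1}$ and $w_b=0$ when it is $\overline{0}$, the LHS reduces to
$$
\sum_{b\geq 0}\binom{i+8b}{8b}w_b\pmod{2}.
$$
Since the number of $\overline{1}$'s on the left matches the number of $1$'s on the right, and both sequences start with a single leading $\overline{0}$ resp.\ $0$, the weights $w_b$ are exactly the entries of the scalar column vector appearing on the RHS.

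Next I would apply Lucas' theorem to $\binom{i+8b}{8b}$ with the decomposition $i=8\tilde{i}+i_0$, $0\leq i_0<8$. Since $8b$ has its three lowest binary digits equal to $0$ and $i_0<8$, no carries occur in the binary addition $i+8b$: the three lowest bits of $i+8b$ are the bits of $i_0$, and the remaining bits coincide with those of $\tilde{i}+b$. Lucas' theorem then yields
$$
\binom{i+8b}{8b}\equiv\binom{i_0}{0}\binom{\tilde{i}+b}{b}\equiv\binom{\tilde{i}+b}{b}\pmod{2}.
$$
Substituting this into the simplified LHS gives
$$
\sum_{b\geq 0}\binom{\tilde{i}+b}{b}w_b\pmod{2},
$$
which is precisely the RHS, since the weights $w_b$ form the scalar column vector on the right.

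There is essentially no technical obstacle here — the only point requiring care is the bookkeeping of block indices, i.e.\ checking that the leading $\overline{0}$-block on the left matches the leading $0$ on the right so that the indices $b$ of the nonzero $\overline{1}$-blocks on the left are exactly the indices $b$ of the nonzero scalars on the right. This alignment is guaranteed by the equal number of $\overline{1}$'s and $1$'s assumption and by the matching initial zero in both vectors.
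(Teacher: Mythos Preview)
Your proof is correct and follows essentially the same route as the paper's: both arguments reduce the left-hand side to $\sum_b \binom{8\tilde{i}+i_0+8b}{8b}w_b$ by using that $\overline{1}$ has a single nonzero entry in its top position, and then apply Lucas' theorem (together with $\binom{i_0}{0}=1$) to obtain $\binom{8\tilde{i}+i_0+8b}{8b}\equiv\binom{\tilde{i}+b}{b}\pmod{2}$. Your write-up is in fact slightly more explicit than the paper's about why only indices $j$ divisible by $8$ contribute and about the block-index alignment.
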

\begin{proof}
We observe 
\begin{align*}
\Big(\binom{i+j}{j}\Big)_{j\geq 0}\cdot\left(\begin{array}{c}\overline{0}\\\overline{1}\\\overline{1}\\\vdots\\\overline{1}\\\overline{1}\\\overline{0}\\\overline{0}\\\vdots\end{array}\right)&=\Big(\binom{8\tilde{i}+i_0+j}{j}\Big)_{j\geq 0}\cdot\left(\begin{array}{c}\overline{0}\\\overline{1}\\\overline{1}\\\vdots\\\overline{1}\\\overline{1}\\\overline{0}\\\overline{0}\\\vdots\end{array}\right)\equiv\binom{i_0}{0}\cdot \Big(\binom{8\tilde{i}+8j}{8j}\Big)_{j\geq 0}\cdot\left(\begin{array}{c}{0}\\{1}\\{1}\\\vdots\\{1}\\{1}\\{0}\\0\\\vdots\end{array}\right)\\
&\equiv(\binom{\tilde{i}+j}{j})_{j\geq 0}\cdot\left(\begin{array}{c}{0}\\{1}\\1\\\vdots\\{1}\\1\\{0}\\0\\\vdots\end{array}\right)\pmod{2},
\end{align*}

where we used Lucas' Theorem twice and the fact that $\binom{i_0}{0}=1$ for all $i_0\in\{0,1,\ldots,7\}$. 
\end{proof}

Furthermore, we see:  
\begin{lemma} \label{lem_c}For $k,t$ such that $0\leq k<2^m$, $1\leq t\leq 2^m$ and $k+t\leq 2^m-1$ we have
\begin{enumerate}
	\item $
c_{k+t,t} \equiv \xi_t\cdot A_{k,t}\pmod{2},
$
\item $
d_{k+t,t} \equiv \xi_t\cdot B_{k,t}+c_{k+t,2^m-t}\pmod{2}.
$
\end{enumerate}

\end{lemma}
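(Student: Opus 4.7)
My plan is to reduce both statements to the single congruence
$$\sum_{r=0}^{t}\binom{t}{r}\binom{k+r+j}{j}\equiv\binom{k+j}{j-t}\pmod{2}\qquad(\ast)$$
(with the convention $\binom{k+j}{j-t}=0$ for $j<t$), which I will obtain by a one-line generating-function computation. Writing $\binom{k+r+j}{j}=[x^{j}](1+x)^{k+r+j}$ and summing,
$$\sum_{r=0}^{t}\binom{t}{r}\binom{k+r+j}{j}=[x^{j}]\bigl((1+x)^{k+j}(2+x)^{t}\bigr)\equiv[x^{j}]\bigl(x^{t}(1+x)^{k+j}\bigr)=\binom{k+j}{j-t}\pmod{2},$$
where the middle step uses $(2+x)^{t}\equiv x^{t}\pmod{2}$ (only the $r=t$ term of the binomial expansion survives modulo $2$).

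For part (1), I would read off the $j$-th entries for $0\le j<t$: $(c_{k+t,t})_{j}=\binom{k+t+j}{j}$ and $(\xi_{t}\cdot A_{k,t})_{j}=\sum_{r=0}^{t-1}\binom{t}{r}\binom{k+r+j}{j}$. Their difference modulo $2$ is the full sum on the left of $(\ast)$, which equals $\binom{k+j}{j-t}=0$ since $j<t$. This gives $c_{k+t,t}\equiv\xi_{t}\cdot A_{k,t}\pmod{2}$.

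For part (2), I would reindex the $2^{m}-t$ coordinates of $d_{k+t,t}$ by $j=t+s$ with $0\le s<2^{m}-t$. The $s$-th components of the three vectors involved are $(d_{k+t,t})_{s}=\binom{k+2t+s}{t+s}$, $(\xi_{t}\cdot B_{k,t})_{s}=\sum_{r=0}^{t-1}\binom{t}{r}\binom{k+r+t+s}{t+s}$, and $(c_{k+t,2^{m}-t})_{s}=\binom{k+t+s}{s}$. Applying $(\ast)$ with $j$ replaced by $t+s$ yields
$$\sum_{r=0}^{t}\binom{t}{r}\binom{k+r+t+s}{t+s}\equiv\binom{k+t+s}{s}\pmod{2},$$
and isolating the $r=t$ term on the left is exactly the identity $(d_{k+t,t})_{s}\equiv(\xi_{t}\cdot B_{k,t})_{s}+(c_{k+t,2^{m}-t})_{s}\pmod{2}$.

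I do not anticipate any serious obstacle: both parts are mechanical consequences of $(\ast)$. The only details to verify are the elementary identity $(2+x)^{t}\equiv x^{t}\pmod{2}$ and the fact that the hypothesis $k+t\le 2^{m}-1$ keeps every row and column index appearing in $A_{k,t}$, $B_{k,t}$, $c_{k+t,\cdot}$, $d_{k+t,\cdot}$ inside $\{0,1,\ldots,2^{m}-1\}$, so that every $p_{ij}$ we invoke is a well-defined entry of $P$.
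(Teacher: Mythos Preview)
Your proof is correct and follows essentially the same architecture as the paper's: both reduce the two items to the single binomial congruence $\sum_{r=0}^{t}\binom{t}{r}\binom{k+r+j}{j}\equiv\binom{k+j}{j-t}\pmod{2}$ and then read off the coordinates (the paper packages the two ranges $j<t$ and $j\ge t$ into a separate corollary, but the content is identical to your direct application). The one genuine difference is in how the key identity is established: the paper states it as a lemma proved by ``simple induction on $t$'', whereas you obtain it via the generating-function line $[x^{j}](1+x)^{k+j}(2+x)^{t}\equiv[x^{j}]x^{t}(1+x)^{k+j}\pmod{2}$. Your argument is slightly slicker and makes the role of the modulus $2$ transparent (it is exactly the step $(2+x)^{t}\equiv x^{t}$), while the inductive proof has the advantage of yielding the identity over $\mathbb{Z}$ rather than only modulo $2$; for the purposes of this lemma either suffices.
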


For the proof of Lemma~\ref{lem_c} we will need the following identity.
\begin{lemma} \label{lem_d}
For all non-negative integers $t, k, l$ we have
$$
\sum^t_{j=0} \binom{t}{j} \cdot \binom{k+l+j}{l} \equiv \binom{k+l}{l-t} \pmod{2}. 
$$
\end{lemma}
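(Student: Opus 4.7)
The plan is to evaluate both sides of the congruence via a single generating-function computation carried out first over $\mathbb{Z}$ and then reduced modulo $2$. Using the standard identity $\binom{k+l+j}{l}=[x^l](1+x)^{k+l+j}$, where $[x^l]$ denotes coefficient extraction, the left-hand side rewrites as
$$\sum_{j=0}^t\binom{t}{j}\binom{k+l+j}{l}=[x^l]\Bigl((1+x)^{k+l}\sum_{j=0}^t\binom{t}{j}(1+x)^j\Bigr)=[x^l]\bigl((1+x)^{k+l}(2+x)^t\bigr),$$
where in the last step I apply the binomial theorem to $(1+(1+x))^t=(2+x)^t$.

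Now I would reduce modulo $2$. In $\mathbb{F}_2[x]$ we have $(2+x)^t\equiv x^t$, so
$$[x^l]\bigl((1+x)^{k+l}(2+x)^t\bigr)\equiv [x^l]\bigl(x^t(1+x)^{k+l}\bigr)=[x^{l-t}](1+x)^{k+l}=\binom{k+l}{l-t}\pmod{2},$$
with the convention that $\binom{n}{r}=0$ for $r<0$, which matches what happens in the generating function when $l<t$ (coefficient extraction of a negative-degree term yields $0$). This chain of equalities and congruences delivers exactly the claim of Lemma~\ref{lem_d}.

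There is no serious obstacle here: the only thing worth double-checking is the edge case $l<t$, for which both sides vanish mod $2$ by the conventions above; and the fact that we are allowed to pass from an integer identity to a congruence mod $2$ in the last step, which is immediate because the integer equality $\sum_{j=0}^t\binom{t}{j}\binom{k+l+j}{l}=[x^l]((1+x)^{k+l}(2+x)^t)$ holds in $\mathbb{Z}[x]$ before we reduce the coefficients mod $2$.
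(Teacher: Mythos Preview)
Your proof is correct. The key steps---rewriting $\binom{k+l+j}{l}$ as a coefficient, collapsing the $j$-sum via the binomial theorem to produce the factor $(2+x)^t$, and then reducing modulo $2$ so that only the $x^t$ term survives---are all valid. The passage from the integer identity to the congruence is justified precisely because coefficient extraction is $\mathbb{Z}$-linear, so reducing the polynomial modulo $2$ before extracting the coefficient gives the same result as reducing afterwards; your remark about the edge case $l<t$ is also accurate.

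The paper takes a different route: it proves the lemma by induction on $t$. The inductive step uses Pascal's rule $\binom{t+1}{j}=\binom{t}{j}+\binom{t}{j-1}$ to split the sum into two sums of the same shape (one with parameter $k$, one with $k+1$), applies the induction hypothesis to each, and then uses Pascal's rule once more to combine $\binom{k+l}{l-t}+\binom{k+l+1}{l-t}\equiv\binom{k+l}{l-t-1}\pmod 2$. Your generating-function argument is more conceptual---it makes transparent \emph{why} the congruence holds, namely that the $2$ in $(2+x)^t$ annihilates every term except the leading one---while the paper's induction is slightly more elementary in that it avoids formal power series altogether. Both are short and self-contained.
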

\begin{proof}
This is simple induction on $t$.
\end{proof}
From Lemma~\ref{lem_d}, we immediately conclude: 
\begin{corollary} \label{cor_b}
~\
\begin{enumerate}
\item [(a)] $\displaystyle\sum^t_{j=0} \binom{t}{j} \cdot \binom{k+l+j}{l} \equiv 0\pmod{2}$ for $l = 0,1,\ldots, t-1$,
\item [(b)] $\displaystyle\sum^{t-1}_{j=0} \binom{t}{j} \cdot \binom{k+l+j}{l} \equiv \binom{k+l+t}{l}\pmod{2}$ for $l=0,1,\ldots, t-1$,
\item [(c)] $\displaystyle\sum^{t-1}_{j=0} \binom{t}{j} \cdot \binom{k+l+j}{l} \equiv \binom{k+l+t}{l}+\binom{k+l}{l-t}\pmod{2}$ for $l=0,1,2,\ldots$.
\end{enumerate}

\end{corollary}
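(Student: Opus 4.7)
The plan is to treat all three parts uniformly as immediate consequences of Lemma~\ref{lem_d}, using only two elementary facts: the convention $\binom{n}{r}=0$ whenever $r<0$, and the fact that $-1\equiv 1\pmod{2}$, so signs are irrelevant in our congruences.

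For part (a), I would simply invoke Lemma~\ref{lem_d}: the left-hand side is congruent to $\binom{k+l}{l-t}$ modulo $2$, and the hypothesis $l\in\{0,1,\ldots,t-1\}$ forces $l-t<0$, hence $\binom{k+l}{l-t}=0$.

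For parts (b) and (c) I would peel off the $j=t$ summand, writing
$$\sum_{j=0}^{t-1}\binom{t}{j}\binom{k+l+j}{l}=\sum_{j=0}^{t}\binom{t}{j}\binom{k+l+j}{l}-\binom{t}{t}\binom{k+l+t}{l}.$$
By Lemma~\ref{lem_d} the full sum on the right equals $\binom{k+l}{l-t}$ modulo $2$, and $\binom{t}{t}=1$. Part (c) is then immediate: the right-hand side is $\binom{k+l}{l-t}+\binom{k+l+t}{l}\pmod{2}$, the minus sign being harmless mod $2$. Part (b) follows from (c) by the same vanishing argument used in (a): under $l<t$ the term $\binom{k+l}{l-t}$ is zero, leaving $\binom{k+l+t}{l}$, as claimed.

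There is really no obstacle here; both steps are one-line manipulations, and the role of Lemma~\ref{lem_d} is purely mechanical. The only points meriting explicit mention are the boundary convention $\binom{n}{r}=0$ for negative lower index and the identity $-1\equiv 1\pmod{2}$, both of which could easily be taken for granted. One may also observe that (b) is just the specialisation of (c) to $l<t$, and (a) is in turn obtained by adding back the $j=t$ term to the case $l<t$ of (b), so all three assertions are really one identity in disguise.
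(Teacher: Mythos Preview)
Your proposal is correct and is exactly the argument the paper has in mind: the paper records no proof beyond the phrase ``From Lemma~\ref{lem_d}, we immediately conclude'', and your write-up simply makes the two implicit steps explicit (the vanishing of $\binom{k+l}{l-t}$ for $l<t$, and the subtraction of the $j=t$ term together with $-1\equiv 1\pmod 2$). There is nothing to add.
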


\begin{proof}[Proof of Lemma~\ref{lem_c}.]
We start with Item (1). We have to show $c_{k+t,t} = \xi_t \cdot A_{k,t}$. This is equivalent to
$$
\binom{k+t+l}{l} = \left(\binom{t}{0}, \ldots, \binom{t}{t-1}\right) \cdot \left(\begin{tabular}{c} $\binom{k+l}{l}$ \\ $\binom{k+1+l}{l}$ \\ $\vdots$ \\ $\binom{k+t-1+l}{l}$ \end{tabular}\right),
$$
in $\mathbb{Z}_2$ for $l = 0, \ldots, t-1$,\\

i.e.,
$$
\sum^{t-1}_{j=0} \binom{t}{j} \cdot \binom{k+l+j}{l} = \binom{k+l+t}{l}
$$
in $\mathbb{Z}_2$ for $l = 0,\ldots, t-1$. This is exactly Corollary~\ref{cor_b} Item (b). 

For the proof of Item (2), i.e. $d_{k+t,t} = \xi_t\cdot B_{k,t}+c_{k+t,2^m-t}$ in $\mathbb{Z}_2$, we see the equivalence with 

$$
\binom{k+t+l}{l} = \left(\binom{t}{0}, \ldots, \binom{t}{t-1}\right) \cdot \left(\begin{tabular}{c} $\binom{k+l}{l}$ \\ $\binom{k+1+l}{l}$ \\ $\vdots$ \\ $\binom{k+t-1+l}{l}$ \end{tabular}\right)+\binom{k+t+l-t}{l-t},
$$
in $\mathbb{Z}_2$ for $l = t, \ldots, 2^m-1$. This is exactly Corollary~\ref{cor_b} Item (c). 
\end{proof}
\begin{remark} \label{rem_a}
{\rm The last entry in $\xi_t$ is $\binom{t}{t-1}=t\equiv 1\mod 2$ if $t$ is odd.}
\end{remark}

From Lemma~\ref{lem_c} we derive the following proposition. 

\begin{proposition} \label{prop_a} 
Let $k\in\mathbb{N}_0,\,t\in\mathbb{N}$ such that $k+t< 2^m$. Then 
\begin{enumerate}
	\item $\kappa_t := c_{k+t,t} \cdot \left(A_{k,t}\right)^{-1}\equiv\xi_t\pmod{2}$ is independent of $k$, and 
\item $\Big(d_{k+t,t}-c_{k+t,t} \cdot \left(A_{k,t}\right)^{-1} B_{k,t}\Big) \cdot\left(\begin{array}{c}\overline{0}\\\overline{1}\\\overline{1}\\\vdots\\\overline{1}\\\overline{1}\\0\\0\\\vdots\\0\end{array}\right)\equiv \binom{\lfloor (k+t)/8\rfloor+1+v}{v}+1  \pmod{2}$, where $v$ is the number of $\overline{1}$s in the vector that consists of $\overline{1}$s, $\overline{0}$s, and $0$s.
\end{enumerate}
\end{proposition}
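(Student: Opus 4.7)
The plan is to derive both parts from Lemma~\ref{lem_c} and the two binomial facts Lemma~\ref{lem:1a} and Lemma~\ref{lem:1c}. For part~(1) I would invoke Lemma~\ref{lem_c}(1), which gives $c_{k+t,t}\equiv \xi_t\cdot A_{k,t}\pmod 2$. Since Lemma~4 of \cite{Lev} guarantees that $A_{k,t}$ is invertible over $\mathbb{Z}_2$, right-multiplying by $(A_{k,t})^{-1}$ immediately yields $\kappa_t=c_{k+t,t}(A_{k,t})^{-1}\equiv\xi_t\pmod 2$, and the right-hand side depends only on $t$. Note that Remark~\ref{rem_a} is a natural by-product of this identity and could be quoted later in the paper where it is used.

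For part~(2), I would first use part~(1) to rewrite $c_{k+t,t}(A_{k,t})^{-1}B_{k,t}\equiv\xi_t B_{k,t}\pmod 2$, so that Lemma~\ref{lem_c}(2) collapses the difference to a single row of $P$:
\[
d_{k+t,t}-c_{k+t,t}(A_{k,t})^{-1}B_{k,t}\equiv c_{k+t,\,2^m-t}\pmod 2,
\]
where subtraction is addition mod~$2$. The claim in~(2) thus reduces to computing the dot product of the row $c_{k+t,\,2^m-t}=\bigl(\binom{k+t}{0},\binom{k+t+1}{1},\ldots,\binom{k+t+2^m-t-1}{2^m-t-1}\bigr)$ against the prescribed column vector.

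Next I would read off exactly which coordinates of this row are selected. The column is a concatenation of the $8$-tuple $\overline{0}$ (all zeros), followed by $v$ copies of $\overline{1}=(1,0,\ldots,0)^T$, and then a tail of zeros. Hence only the entries at positions $8,16,\ldots,8v$ survive, and the dot product equals
\[
\sum_{j=1}^{v}\binom{k+t+8j}{8j}\pmod 2.
\]
Writing $k+t=8\lfloor(k+t)/8\rfloor+(k+t)_0$ with $(k+t)_0\in\{0,\ldots,7\}$, Lucas' theorem (packaged exactly as Lemma~\ref{lem:1c}) reduces each term via $\binom{k+t+8j}{8j}\equiv\binom{\lfloor(k+t)/8\rfloor+j}{j}\pmod 2$, because $\binom{(k+t)_0}{0}=1$. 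Finally Lemma~\ref{lem:1a} with $i=\lfloor(k+t)/8\rfloor$ and $u=v$ evaluates the resulting sum to $\binom{\lfloor(k+t)/8\rfloor+1+v}{v}+1\pmod 2$, which is the desired conclusion.

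The only genuine subtlety I foresee is correctly interpreting the trailing unbarred $0$s of the column (the total length must be $2^m-t$, which need not be a multiple of $8$) and confirming that they contribute nothing, so that Lemma~\ref{lem:1c} can be applied without a boundary correction. Once this bookkeeping is in place, the argument is a mechanical assembly of the already-proved lemmas, with no new combinatorial input required.
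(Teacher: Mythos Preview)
Your proposal is correct and follows essentially the same route as the paper: part~(1) is obtained from Lemma~\ref{lem_c}(1) by right-multiplying with $(A_{k,t})^{-1}$, and part~(2) is reduced via part~(1) and Lemma~\ref{lem_c}(2) to the dot product $c_{k+t,2^m-t}\cdot(\overline{0},\overline{1},\ldots,\overline{1},0,\ldots,0)^T$, which is then evaluated by Lemma~\ref{lem:1c} followed by Lemma~\ref{lem:1a}. The only cosmetic difference is that the paper writes the intermediate sum as $\sum_{i=1}^{v}\binom{8\lfloor (k+t)/8\rfloor+8i+8\{(k+t)/8\}}{8i}$ before applying Lucas, whereas you write it directly as $\sum_{j=1}^{v}\binom{k+t+8j}{8j}$; both are the same quantity, and your bookkeeping remark about the trailing unbarred zeros is exactly the (implicit) point needed to justify the application of Lemma~\ref{lem:1c}.
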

\begin{proof}
The first item is an immediate consequence of Item (1) in Lemma \ref{lem_c} together with the fact that the square matrix $A_{k,t}$ is regular. 

For the second item note that Item (1) together with Item (2) in Lemma \ref{lem_c} implies 
\begin{eqnarray*}
\left(d_{k+t,t}-c_{k+t,t} \cdot \left(A_{k,t}\right)^{-1} B_{k,t}\right)\cdot \left(\begin{array}{c}\overline{0}\\\overline{1}\\\overline{1}\\\vdots\\\overline{1}\\\overline{1}\\0\\0\\\vdots\\0\end{array}\right)&\equiv&\, (d_{k+t,t}-\xi_t B_{k,t})\cdot \left(\begin{array}{c}\overline{0}\\\overline{1}\\\overline{1}\\\vdots\\\overline{1}\\\overline{1}\\0\\0\\\vdots\\0\end{array}\right)\\
&\equiv&\,c_{k+t,2^m-t}\left(\begin{array}{c}\overline{0}\\\overline{1}\\\overline{1}\\\vdots\\\overline{1}\\\overline{1}\\0\\0\\\vdots\\0\end{array}\right)\\
&\equiv&\,\sum_{i=1}^{v}\binom{8\lfloor (k+t)/8\rfloor+8i+8\{(k+t)/8\}}{8i}\\
&\equiv&\,\sum_{i=1}^{v}\binom{\lfloor (k+t)/8\rfloor+i}{i}\pmod{2}\\
&\equiv&\,\binom{\lfloor (k+t)/8\rfloor+1+v}{v}+1\pmod{2}, 
\end{eqnarray*}
where we applied Lemma \ref{lem:1c} and Lemma \ref{lem:1a}.

\end{proof}

\section{The proof of the Theorem} \label{sect_d}

We will construct now for every $m$ large enough an $N$ with $n_m < N < n_{m+1}$ and an interval $J \subseteq [0,1)$ such that 
$$
\# \left\{0 \leq n < N \left|\left\{2^n \alpha\right\} \in J\right\} \right. \geq N \cdot \lambda (J) + c \cdot \left(\log N\right)^2
$$
(with a fixed absolute positive constant $c$, and where $\lambda (J)$ denotes the length of the interval $J$). This proves the Theorem. \\

For given $m$ (large enough) let $w_l := 2^{m-3}-1-8l$ for $l =0,1,\ldots,  2^{m-7} - 1 =: M.$\\
For $m \geq 7$ all $w_l$ and $M$ are integers. The $w_l$ all are odd, and 
$$
 2^{m-4} < w_M < w_{M-1} < \ldots < w_0 < 2^{m-3} .
$$
Let $N := n_m + 2^m \cdot \left(2^{w_{M}} + 2^{w_{M-1}} + \ldots + 2^{w_0}\right)$.\\

%In the following we sometimes will use the abbreviation $w_l := 2^m-i_l$. 
%We have $\frac{1}{16} \cdot 2^m<w_M < w_{M-1} < \ldots < w_0 < \frac{1}{8} \cdot 2^m$. Of course we have $n_m < N < n_{m+1}$.\\ %Note that for all $m$ large enough $w_l\equiv-1\pmod{2^3}$ for $l=0,1,\ldots,M$.\\

We will consider the sequence elements $x_n := \left\{2^n \alpha\right\}$ for $n=0,1,\ldots, N-1$, i.e., the points $x_0, \ldots, x_{n_{m}-1}$ and the points
$$
x_{n,k} := \{(n_m + 2^m n+k)\alpha\}
$$
for $n=0,1,\ldots, 2^{w_M} + 2^{w_{M-1}} + \ldots + 2^{w_0}-1$ and $k=0,1,\ldots, 2^m-1$. We divide this set of $n$'s in blocks $\mathcal{B}_l$ of the form $n = B_l, B_l +1, \ldots, B_l +2^{w_l} -1$ for $l = 0, \ldots, M$, where $B_0 := 0$ and $B_l := 2^{w_0} + \ldots + 2^{w_{l-1}}$.\\

We construct in the following an interval $J \subseteq [0,1)$ that contains ``too many'' of the points $x_0, \ldots, x_{N-1}$. $J$ will be of the form $J_M \cup J_{M-1} \cup \ldots \cup J_0$ with $J_l := \left[\frac{U(l)}{2^{w_l}}, \frac{V(l)}{2^{w_l}}\right)$, where $0 \leq U(l) < V(l) < 2^{w_l}, \quad U(l)\in\mathbb{N}_0,\, V(l) \in \frac{1}{2} \mathbb{N}_0$ and with $ V(l) - U(l) \in\{\frac{1}{2},\frac{3}{2}\}$, and $\frac{V(l)}{2^{w_l}} = \frac{U(l-1)}{2^{w_{l-1}}}$ for $l = 1,\ldots, M$. That means, $J$ is of the form as sketched in Figure \ref{fig:Figure2}.
\begin{figure}[h]
	\centering
		\includegraphics[width=0.70\textwidth]{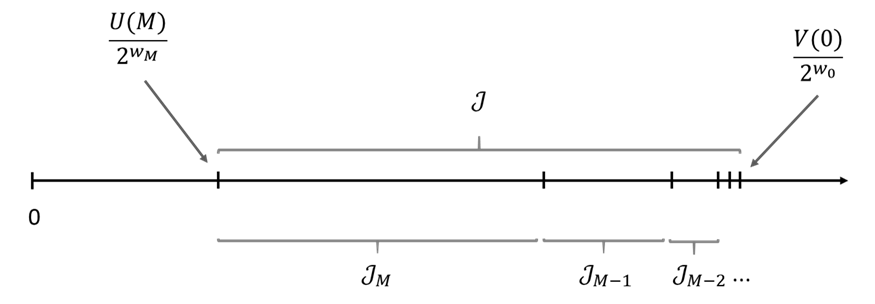}
	\caption{The interval $J$.}\label{fig:Figure2}
\end{figure}

For the length $\lambda(J)$ of the interval $J$ we have
$$
\lambda(J) \leq 2 \cdot \sum^M_{l=0} \frac{1}{2^{w_l}} \leq 4 \cdot \frac{1}{2^{w_M}} < \frac{4}{2^{2^{m-4}}},
$$
where we used $w_M>2^{m-4}$. \\

Now let us recall and use Lemma~\ref{lem_d}:\\
For every $l = 0,\ldots, M$ we consider the points $x_{n,k}$ with $0 \leq k < 2^m$ and $n \in \mathcal{B}_l$. By Lemma~\ref{lem_d} there are at most $2^{m+1}$ integers $c_l^{(i)}$ such that the interval $\left[\frac{c_l^{(i)}}{2^{w_l}}, \frac{c^{(i)}_{l} +1}{2^{w_l}}\right)$ does not contain exactly $2^m$ of these $x_{n,k}$.\\
Altogether there are at most $2^{m+1} \cdot (M+1) < 2^{2m}$ intervals of the form $\left[\frac{c_l^{(i)}}{2^{w_l}}, \frac{c^{(i)}_{l} +1}{2^{w_l}}\right)$ which do not contain exactly $2^m$ of the $x_{n,k}$ with $0 \leq k < 2^m$ and $n \in \mathcal{B}_l$ for some $l = 0,\ldots, M$. So there are at most $2^{2m}$ such ``exceptional intervals'', and the length of the union of these intervals is at most $2^{2m} \cdot \sum^M_{l=0} \frac{1}{2^{w_l}} < \frac{2^{2m+1}}{2^{2^{m-4}}}< \frac{1}{4}$ for $m$ large enough.\\
Hence there exists a sub-interval $Z$ of $[0,1)$ with length at least $\frac{1}{2^{2 m+1}}$ which has empty intersection with every of the exceptional intervals. \\

In the following we construct $J=J_M\cup\cdots\cup J_0$. 

We start with the construction of $J_M$: 

Let $U(M)$ be the least even integer such that $\frac{U(M)}{2^{w_M}} \in Z$. 

For $m$ large enough such $U(M)$ certainly exist. The value $\frac{U(M)}{2^{w_M}}$ is the left border of $J_M$ and hence of $J$. Since $\lambda(J) \leq \frac{4}{2^{2^{m-4}}}$ and $\lambda (Z) \geq \frac{1}{2^{2 m+1}}$, for $m$ large enough we have $J \subseteq Z$ and hence $J$ has empty intersection with every of the exceptional intervals.\\

In the following we construct the right interval boundary $\frac{V_M}{2^{w_M}}$ of $J_M$: For this reason we consider the points $x_{n,k}$ for $n \in \mathcal{B}_M$ and $0 \leq k <    2^m  $. %Note that $\frac{3}{4} \cdot 2^m < \frac{7}{8} \cdot 2^m - m < i_0 - m < i_M -m$ for $m$ large enough.
Let $\tilde{J}_M := \left[\frac{U(M)}{2^{w_M}}, \frac{U(M) +1}{2^{w_M}}\right)$. \\

We will show now that for each $k\geq 0$ with $ k+w_M <   2^m$ %and with $k+w_M\not\equiv 0\pmod{2^3}$ 
there is exactly one $n \in \mathcal{B}_M$ such that $x_{n,k} \in \tilde{J}_M$, and in a second step we will analyze in which sub-interval
$$
J_{M, \gamma} := \left[\frac{U(M)}{2^{w_M}} + \frac{\gamma}{2^{w_M +1}}, \frac{U(M)}{2^{w_M}} + \frac{\gamma+1}{2^{w_M +1}}\right)
$$
for $\gamma = 0,1$ this $x_{n,k}$ is located. \\

Now let $k\geq 0$ with $ k+w_M <  2^m$. We write $U(M) := u_{w_M-1} + u_{w_M-2} \cdot 2 + \ldots + u_0 \cdot 2^{w_M-1}$, 
\begin{eqnarray*}
n&=&e_0(n) + 2 \cdot e_1(n) + \ldots + 2^{w_M-1} \cdot e_{w_M-1}(n)+B_{M}\\
&=& e_0(n) + 2 \cdot e_1(n) + \ldots + 2^{w_M-1} \cdot e_{w_M-1}(n)+2^{w_{M-1}}+2^{w_{M-2}}+\cdots+2^{w_0}\\
&=&e_0(n) + 2 \cdot e_1(n) + \ldots + 2^{w_M-1} \cdot e_{w_M-1}(n)+2^{w_M+8}+2^{w_M+2\cdot 8}\cdots+2^{w_M+M\cdot 8}.
\end{eqnarray*}
 %, and $B_M=2^{w_{M}}(2^7+2^{7+8}+2^{7+2\cdot 8}+\cdots+ 2^{7+(M-1)\cdot 8})=:2^{w_{M}}(\sum_{i=0}^{2^m-w_M-1}\delta_{i+w_M}2^i)$. Note that $\delta_{i+w_M}=0$ if $i\not\equiv -1\pmod{2^3}$. 

Then this leads to the following two systems, where the vector consisting of $\overline{0}$s, $\overline{1}$s, and $0$s, contains $M$ consecutive $\overline{1}$s:
$$
A_{k,w_M} \cdot \left(\begin{tabular}{c} $e_0(n)$ \\ $\vdots$ \\ $e_{w_M-1} (n)$ \end{tabular}\right) + B_{k,w_M}\left(\begin{array}{c}\overline{0}\\\overline{1}\\\vdots\\\overline{1}\\0\\\vdots\\0\end{array}\right) = \left(\begin{tabular}{c} $u_0$ \\ $\vdots$ \\ $u_{w_M -1}$ \end{tabular}\right)
$$
and
$$
c_{k+w_M,w_M} \cdot \left(\begin{tabular}{c} $e_0 (n)$ \\ $\vdots$ \\ $e_{w_M-1}(n)$ \end{tabular}\right) + d_{k+w_M,w_M}\left(\begin{array}{c}\overline{0}\\\overline{1}\\\vdots\\\overline{1}\\0\\\vdots\\0\end{array}\right) = \gamma.
$$
Here $A_{k,w_M}$, $C_{k,w_M}$, $c_{k+w_M,w_M}$, and $d_{k+w_M,w_M}$ are the magnitudes defined in Section ~\ref{sect_c}. Note that here we used the fact that 
$k + w_M<2^m$. Otherwise the system would contain conditions described by using $e_j (n+1)$. 

Since $A_{k,w_M}$ is regular the first system for every $k$ has a unique solution
$$
\left(\begin{tabular}{c} $e_0 (n)$ \\ $\vdots$ \\ $e_{w_M-1}$ \end{tabular}\right) = A_{k,w_M}^{-1} \cdot \left(\begin{tabular}{c} $u_0$ \\ $\vdots$ \\  $u_{w_M-1}$ \end{tabular}\right)-A_{k,w_M}^{-1}B_{k,w_M}\left(\begin{array}{c}\overline{0}\\\overline{1}\\\vdots\\\overline{1}\\0\\\vdots\\0\end{array}\right)
$$
and inserting this solution into the second system leads to
$$
\gamma = c_{k+w_M,w_M} \cdot A_{k,w_M}^{-1} \cdot \left(\begin{tabular}{c} $u_0$ \\ $\vdots$ \\ $u_{w_M-1}$ \end{tabular}\right) -c_{k+w_M,w_M} \cdot A_{k,w_M}^{-1}B_{k,w_M}\left(\begin{array}{c}\overline{0}\\\overline{1}\\\vdots\\\overline{1}\\0\\\vdots\\0\end{array}\right) +  d_{k+w_M,w_M}\left(\begin{array}{c}\overline{0}\\\overline{1}\\\vdots\\\overline{1}\\0\\\vdots\\0\end{array}\right). 
$$
Proposition~\ref{prop_a} Item (1) guarantees that $c_{k+w_M,w_M} \cdot A_k^{-1} = \xi_{w_M}$ is independent of $k$. 
Let $\mathcal{A}_{\delta}(M)$ be the number of $k$s in $\{0,\ldots, 2^m-1-w_M\}$ such that 
$\gamma=\delta$ with fixed $\delta\in\{0,1\}$. 
We define 
$$\mathcal{A}(M):=\max(\mathcal{A}_0(M),\mathcal{A}_1(M)),$$
which we will estimate later.

Altogether we know, that there is a $\gamma\in\{0,1\}$ such that $x_{n,k}\in J_{M,\gamma}$ for at least $\mathcal{A}(M)=:q(M)2^m$ values of $k$s.\\

We distinguish between the cases $\gamma=0$ and $\gamma=1$: 

\textbf{If} $\boldsymbol{\gamma = 0}$, then we choose $J_M := \left[\frac{U(M)}{2^{w_M}} + \frac{U(M) + \frac{1}{2}}{2^{w_M}}\right)$. $J_M$ then contains at least $\mathcal{A}(M)=q(M)2^m$ of the points $x_{n,k}$ with $n \in \mathcal{B}_M$. \\

\textbf{If} $\boldsymbol{\gamma = 1}$, then we choose $J_M := \left[\frac{U(M)}{2^{w_M}} + \frac{U(M) + \frac{3}{2}}{2^{w_M}}\right)$. The reason for this choice is the following: Since $\tilde{J}_M \subseteq Z$ the interval $\tilde{J}_M$ is not an ``exceptional interval'' and contains exactly $2^m$ points $x_{n,k}$ with $0 \leq k < 2^m$ and $n \in \mathcal{B}_M$.\\

The question, how many points of $x_{n,k}$ with $ k+w_M <   2^m$  lie in 
$\left[\frac{U(M) + 1}{2^{w_M}}, \frac{U(M) + 2}{2^{w_M}}\right)$ and, more detailed, in which of the sub-intervals $\left[\frac{U(M) +1}{2^{w_M}} + \frac{\tilde{\gamma}}{2^{w_M+1}}, \frac{U(M) +1}{2^{w_M}} + \frac{\tilde{\gamma}+1}{2^{w_M+1}}\right)$ with $\tilde{\gamma}\in\{0,1\}$ these points are located, now leads to the systems
$$
A_{k,w_M} \cdot \left(\begin{tabular}{c} $e_0(n)$ \\ $\vdots$ \\ $e_{w_M-1} (n)$ \end{tabular}\right) + B_{k,w_M}\left(\begin{array}{c}\overline{0}\\\overline{1}\\\vdots\\\overline{1}\\0\\\vdots\\0\end{array}\right) = \left(\begin{tabular}{c} $u_0$ \\ $\vdots$ \\ $u_{w_M -1}+1$ \end{tabular}\right)
$$
and
$$
c_{k+w_M,w_M} \cdot \left(\begin{tabular}{c} $e_0 (n)$ \\ $\vdots$ \\ $e_{w_M-1}(n)$ \end{tabular}\right) + d_{k+w_M,w_M}\left(\begin{array}{c}\overline{0}\\\overline{1}\\\vdots\\\overline{1}\\0\\\vdots\\0\end{array}\right)  = \tilde{\gamma}.
$$
Hence, as before
$$
\tilde{\gamma} = \underbrace{c_{k+w_M,w_M} \cdot A_{k,w_M}^{-1}}_{=\xi_{w_M}} \cdot \left(\begin{tabular}{c} $u_0$ \\ $\vdots$ \\ $u_{w_M-1}+1$ \end{tabular}\right) -c_{k+w_M,w_M} \cdot A_{k,w_M}^{-1}B_{k,w_M}\left(\begin{array}{c}\overline{0}\\\overline{1}\\\vdots\\\overline{1}\\0\\\vdots\\0\end{array}\right) +  d_{k+w_M,w_M}\left(\begin{array}{c}\overline{0}\\\overline{1}\\\vdots\\\overline{1}\\0\\\vdots\\0\end{array}\right).
$$
Note that we have chosen $U(M)$ to be an even integer, and so $u_{w_M-1}=0$. Moreover, by Remark~\ref{rem_a} the last entry in $\xi_{w_M}$ equals $1$ if $w_M$ is odd (what indeed is satisfied). Therefore $\tilde{\gamma} \equiv \gamma + 1 \equiv 1+1\equiv 0 \pmod{2}$. Hence $\left[\frac{U(M) + 1}{2^{w_M}}, \frac{U(M) + 1}{2^{w_M}} + \frac{1}{2} \cdot \frac{1}{2^{w_M}}\right)$ contains at least $\mathcal{A}(M)=q(M) \cdot 2^m$ points of the $x_{n,k}$ with $k+w_M < 2^m$ and $n \in \mathcal{B}_M$.\\

We summarize: For both choices of $J_M$ we have
$$
\# \left\{n \in \mathcal{B}_M, 0 \leq k < 2^m \left|x_{n,k} \in J_M\right\}\right. \geq 2^m \cdot 2^{w_M} \cdot \lambda \left(J_M\right) + \left(q(M)-\frac{1}{2}\right) \cdot 2^m.
$$

In the next step we will show how to choose the interval $J_{M-1}$. Then it will be clear how we will, quite analogously, choose the intervals $J_{M-2}, \ldots, J_0$.\\

We recall that the interval $J_{M-1}$ is denoted as $J_{M-1} = \left[\frac{U(M-1)}{2^{w_{M-1}}}, \frac{V(M-1)}{2^{w_{M-1}}}\right)$, where $\frac{U(M-1)}{2^{w_{M-1}}} = \frac{V(M)}{2^{w_M}}$. Note that ${w_{M-1}}={w_M}+2^3$. Thus $U(M-1)$ is an even number. Let $U(M-1) := u_{w_{M-1}-1} + u_{w_{M-1}-2} \cdot 2 + \ldots + u_0 \cdot 2^{w_{M-1}-1}$. Similarly to the choice of $J_M$ we will choose $J_{M-1}$ either as
$$
J_{M-1} := J_{M-1}^{(1)} := \left[\frac{U(M-1)}{2^{w_{M-1}}}, \frac{U(M-1)}{2^{w_{M-1}}} + \frac{1}{2} \cdot \frac{1}{2^{w_{M-1}}}\right)
$$
or
$$
J_{M-1} := J_{M-1}^{(2)} := \left[\frac{U(M-1)}{2^{w_{M-1}}}, \frac{U(M-1)}{2^{w_{M-1}}} + \frac{3}{2} \cdot \frac{1}{2^{w_{M-1}}}\right).
$$
To decide, which of the two choices we prefer, we first consider the interval 
$$
\tilde{J}_{M-1} := \left[\frac{U(M-1)}{2^{w_{M-1}}}, \frac{U(M-1)}{2^{w_{M-1}}} + \frac{1}{2^{w_{M-1}}}\right)
$$
and the points $x_{n,k}$ with $n \in \mathcal{B}_{M-1}$ and $k = 0,1,\ldots, 2^m -1$. \\
$\tilde{J}_{M-1}$ by Lemma~\ref{lem_b} contains exactly $2^m$ of these points. Again for $k\geq 0$ such that $ k +w_{M-1}<  2^m$ we ask where exactly these points are located in $\tilde{J}_{M-1}$. Especially, again we ask how these points are distributed to the sub-intervals
$$
\tilde{J}_{M-1, \gamma} := \left[\frac{U(M-1)}{2^{w_{M-1}}} + \frac{\gamma}{2^{w_{M-1}+1}}, \frac{U(M-1)}{2^{w_{M-1}}} + \frac{\gamma+1}{2^{w_{M-1}+1}}\right)
$$
for $\gamma = 0, 1$. Then now for $n = e_0(n) + 2 \cdot e_1(n) + \ldots + 2^{w_{M-1}-1} \cdot e_{w_{M-1}-1}(n)+B_{M-1}$ we arrive at the systems, where the vector consisting of $\overline{0}$,s, $\overline{1}$s, and $0$'s, now contains $(M-1)$ consecutive $\overline{1}$s:

$$
A_{k,w_{M-1}} \cdot \left(\begin{tabular}{c} $e_0(n)$ \\ $\vdots$ \\ $e_{w_{M-1}-1} (n)$ \end{tabular}\right) + B_{k,w_{M-1}}\left(\begin{array}{c}\overline{0}\\\overline{1}\\\vdots\\\overline{1}\\0\\\vdots\\0\end{array}\right) = \left(\begin{tabular}{c} $u_0$ \\ $\vdots$ \\ $u_{w_{M-1} -1}$ \end{tabular}\right)
$$
and
$$
c_{k+w_{M-1},w_{M-1}} \cdot \left(\begin{tabular}{c} $e_0 (n)$ \\ $\vdots$ \\ $e_{w_{M-1}-1}(n)$ \end{tabular}\right) + d_{k+w_{M-1},w_{M-1}}\left(\begin{array}{c}\overline{0}\\\overline{1}\\\vdots\\\overline{1}\\0\\\vdots\\0\end{array}\right)  = \gamma.
$$

Again: Solving this system for $k +w_{M-1} < 2^m$ yields
$$
\gamma=\xi_{w_{M-1}}\begin{pmatrix}u_0\\\vdots\\u_{w_{M-1}-1}\end{pmatrix}-c_{k+w_{M-1},w_{M-1}} \cdot A_{k,w_{M-1}}^{-1}B_{k,w_{M-1}}\left(\begin{array}{c}\overline{0}\\\overline{1}\\\vdots\\\overline{1}\\0\\\vdots\\0\end{array}\right) +  d_{k+w_{M-1},w_{M-1}}\left(\begin{array}{c}\overline{0}\\\overline{1}\\\vdots\\\overline{1}\\0\\\vdots\\0\end{array}\right),
$$
which attains the same value for at least $\mathcal{A}(M-1)=:q(M-1) \cdot 2^m$ $k$s. Here $\mathcal{A}(M-1)$ is defined in the same manner as $\mathcal{A}(M)$. \\

Analogously to the construction of $J_M$ and with the same argumentation we distinguish between the two cases $\gamma = 0$ and $\gamma = 1$. In the first case we choose $J_{M-1} := J_{M-1}^{(1)}$. \\
$J_{M-1}$ then has length $\frac{1}{2} \cdot \frac{1}{2^{w_{M-1}}}$ and contains at least $q(M-1) \cdot 2^m$ of the points $x_{n,k}$ with $k = 0, \ldots, 2^m-1$ and $n \in \mathcal{B}_{M-1}$. In the second case we choose $J_{M-1} := J_{M-1}^{(2)}$. \\
$J_{M-1}$ then has length $\frac{3}{2} \cdot \frac{1}{2^{w_{M-1}}}$ and contains at least $2^m+q(M-1) \cdot 2^m$ of the points $x_{n,k}$ with $k = 0, \ldots, 2^m-1$ and $n \in \mathcal{B}_{M-1}$. \\

In both cases
$$
\# \left\{n \in \mathcal{B}_{M-1}, 0 \leq k < 2^m \left|x_{n,k} \in J_{M-1}\right\} \right. \geq 2^m \cdot 2^{w_{M-1}} \cdot \lambda \left(J_{M-1}\right) + \left(q(M-1)-\frac{1}{2}\right) \cdot 2^m.
$$
Further, since $J_M \cup J_{M-1} \subseteq Z$, we know that the interval 
$$
J_M = \left[\frac{U(M)}{2^{w_M}}, \frac{V(M)}{2^{w_M}}\right) = \left[\frac{U(M) \cdot 2^{i_M-i_{M-1}}}{2^{w_{M-1}}}, \frac{V(M) \cdot 2^{i_M - i_{M-1}}}{2^{w_{M-1}}}\right)
$$
satisfies that $U(M) \cdot 2^{i_M - i_{M-1}}$ and $V(M) \cdot 2^{i_M - i_{M-1}}$ are integers and therefore therefore $J_M$ contains at least $2^m \cdot 2^{w_{M-1}} \cdot \lambda \left(J_M\right)$ of the points $x_{n,k}$ with $k=0, \ldots, 2^m-1$ and $n \in \mathcal{B}_{M-1}$. Together
$$
\# \left\{n \in \mathcal{B}_{M-1}, 0 \leq k < 2^m \left|x_{n,k} \in J_M \cup J_{M-1}\right\}\right. \geq 2^m \cdot 2^{w_{M-1}} \cdot \lambda \left(J_M \cup J_{M-1}\right) + \left(q(M-1)-\frac{1}{2}\right) \cdot 2^m.
$$
In exactly this way we proceed to construct $J_{M-1}, \ldots, J_0$ such that finally for every $l=0, \ldots, M$ we have: 
\begin{equation}\label{equ:number:B_l}
\# \left\{n \in \mathcal{B}_l, 0 \leq k < 2^m \left|x_{n,k} \in J_M \cup \ldots \cup J_l\right.\right\} \geq 2^m \cdot 2^{w_l} \cdot \lambda \left(J_M \cup \ldots \cup J_l\right) + \left(q(l)-\frac{1}{2}\right) \cdot 2^m.
\end{equation}
We set $J := J_M \cup \ldots \cup J_0$. We estimate $\# \left\{0 \leq n < N \left|x_n \in J\right.\right\}$ from below.: 
\begin{eqnarray*}
\# \left\{0 \leq n < N \left|x_n \in J\right.\right\} &=& \# \left\{0 \leq n < n_m\left|x_n \in J\right.\right\} \\
&&+ \sum_{l=0}^{M}\# \left\{n \in \mathcal{B}_l, 0 \leq k < 2^m \left|x_{n,k} \in J_M \cup \ldots \cup J_l\right.\right\}\\
&&+ \sum_{l=0}^{M}\# \left\{n \in \mathcal{B}_l, 0 \leq k < 2^m \left|x_{n,k} \in J_{l-1} \cup \ldots \cup J_0\right.\right\}\\
&\geq &\# \left\{0 \leq n < n_m\left|x_n \in J\right.\right\} \\
&&+ \sum_{l=0}^{M}\# \left\{n \in \mathcal{B}_l, 0 \leq k < 2^m \left|x_{n,k} \in J_M \cup \ldots \cup J_l\right.\right\}\\
&\geq &\# \left\{0 \leq n < n_m\left|x_n \in J\right.\right\}\\
&&+ \sum^M_{l=0} 2^m \cdot 2^{w_l} \cdot \lambda \left(J_M \cup \ldots \cup J_l\right) + \sum^M_{l=0}  \left(q(l)-\frac{1}{2}\right) \cdot 2^m,
\end{eqnarray*}
where in the last step we used \eqref{equ:number:B_l}. 

Now it is the task of estimating $\sum^M_{l=0}  q(l)\cdot 2^m =\sum^M_{l=0}\mathcal{A}(l)$ from below: 
We know that for each $l$ in $\{0,\ldots,M\}$, for at least $\mathcal{A}(l)$ values of $k$s with $k+w_l<2^m$, the term 
\begin{equation}\label{equ:sv}
-c_{k+w_{l},w_{l}} \cdot A_{k,w_{l}}^{-1}B_{k,w_{l}}\left(\begin{array}{c}\overline{0}\\\overline{1}\\\vdots\\\overline{1}\\0\\\vdots\\0\end{array}\right) +  d_{k+w_{l},w_{l}}\left(\begin{array}{c}\overline{0}\\\overline{1}\\\vdots\\\overline{1}\\0\\\vdots\\0\end{array}\right)\pmod{2},
\end{equation}
where here the vector consisting of $\overline{0}$s, $\overline{1}$s, and $0$s contains $l$ consecutive $\overline{1}$s
, takes the same value $0$ or $1$. 
By the second item of Proposition \ref{prop_a} we know that $\mathcal{A}(l)$ can be estimated from below by the number of $k$ with $k+w_l<2^m$ for which $$\binom{\lfloor\frac{k+w_l}{8}\rfloor+1+l}{l}+1\pmod{2}$$
equals $1$. 
%We apply the second item in Proposition \ref{prop_a} and see that by this assumption
%for $\mathcal{A}(l)$ values of $k$s with $k+w_l<2^m$, the term 
%$$\binom{\lfloor\frac{k+w_l}{8}\rfloor+1+l}{l}+1\pmod{2}$$
%takes the same value $0$ or $1$. 

Note that $M+1 = 2^{m-7}$. For each $l\in\{0,1,\ldots,M\}$ take now those $k\in\{0,1,\ldots,2^m-1\}$ such that $\lfloor\frac{k+w_l}{8}\rfloor\in\{2^{m-3}-\frac{14}{2^4}2^{m-3},\ldots,2^{m-3}-1\}$. 

Note, that indeed every value of $z$ between $2^{m-3}-\frac{14}{2^4}2^{m-3}$ and $2^{m-3}-1$ is attained by $\lfloor\frac{k+w_l}{8}\rfloor$ for exactly $8$ values of $k$ between $0$ and $2^m-1$. This follows from $\lfloor\frac{8+w_0}{8}\rfloor\leq 2^{m-3}-\frac{14}{2^4}2^{m-3}$ and $\lfloor\frac{2^m-8+w_M}{8}\rfloor\geq 2^{m-3}-1$. 

%Note that for exactly $2^3$ values of $k$, $\lfloor\frac{k+w_l}{8}\rfloor$ takes the same value. We observe that at least for
%$$2^3\cdot(\mbox{the number of $1$s in the $l$th column of $D_m$})$$
%values of such $k$ the term \eqref{equ:sv} takes the same value $0$. 
Hence, 
$$\mathcal{A}(l)\geq 2^3\cdot(\mbox{the number of $1$s in the $l$th column of $D_m$}).$$
Note that $D_m$ is a $(14\cdot 2^{m-7}\times 2^{m-7})$ matrix. From Lemma \ref{lem:1b} we know that $D_m$ contains $14\cdot 2^{m-7}\cdot 2^{m-7}\left(1-\left(\frac{3}{4}\right)^{m-7}\right)$ many $1$s. 
Hence, for $m>7$ large enough such that $\left(1-\left(\frac{3}{4}\right)^{m-7}\right)\geq \frac{31}{32}$, we have:
\begin{align*}
\sum^M_{l=0}\mathcal{A}(l)&\geq 2^3 \cdot (\mbox{the number of $1$s in $D_m$})\\
&= 2^3\left(1-\left(\frac{3}{4}\right)^{m-7}\right) 14\cdot 2^{m-7}2^{m-7}\\
&\geq (M+1)2^m \frac{7}{2^3}\cdot \frac{31}{32}=2^{2m}\frac{7}{2^{10}}\cdot \frac{31}{2^5}.
\end{align*}

%\begin{align*}
%\sum^M_{l=0}  q(l)\cdot 2^m &=\sum^M_{l=0}\mathcal{A}(l)\\
%&=\sum^M_{l=0}\max(\mathcal{A}_0(l),\mathcal{A}_1(l))\\
%&\geq \mbox{ number of $1$s in }D_{m-3}\\\
%&=14\cdot 2^{2m-6}\left(1-\left(\frac34\right)^{m-3}\right)=2^{2m}\frac{7}{2^5}\left(1-\left(\frac34\right)^{m-3}\right), 
%\end{align*}
%where in the before last step we applied Lemma~\ref{lem:1b}. 

By Lemma~\ref{lem_a} we have
$$
\# \left\{0 \leq n < n_m \left|x_n \in J\right.\right\} \geq n_m \cdot \lambda (J) - \delta \cdot \log n_m
$$
with a fixed positive constant $\delta$. 

Further we derive an upper bound for $\lambda \left(J_{l-1} \cup \ldots \cup J_0\right)$:
\begin{equation}\label{equ:bound_J...}
\lambda \left(J_{l-1} \cup \ldots \cup J_0\right)\leq \frac{3}{2} \cdot \left(\frac{1}{2^{w_{l-1}}} + \ldots + \frac{1}{2^{w_0}}\right) \leq 3 \cdot \frac{1}{2^{w_{l-1}}} = 3 \cdot \frac{1}{2^{w_l +2^3}}.
\end{equation}

Altogether, for $m$ large enough (note that $M+1 = 2^{m-7}$) we obtain: 
\begin{eqnarray*}
 \#  \left\{0 \leq n < N \left|x_n \in J\right\}\right. &\geq& n_m \cdot \lambda (J) - \delta \cdot \log n_m + \sum^M_{l=0} 2^m \cdot 2^{w_l} \cdot \lambda(J)\\
&  &  - \sum^M_{l=0} 2^m \cdot 2^{w_l} \cdot 3 \cdot \frac{1}{2^{w_l+2^3}} + \sum_{l=0}^M  \left(q(l)-\frac{1}{2}\right) \cdot 2^m \geq\\
&\geq & N \cdot \lambda(J) - \delta \cdot \log n_m - 2^{2m-7} \cdot \frac{3}{2^{8}} +  \sum_{i=0}^{M} \mathcal{A}(l)-2^{2m-7}\frac{1}{2} \geq \\
& \geq & N \cdot \lambda (J) - \delta \cdot \log N - 2^{2m} \cdot \frac{3+2^7}{2^{15}}
+ 2^{2m}\frac{7}{2^{10}}\frac{31}{2^5} \\
& \geq& N \cdot \lambda (J) - \delta \cdot \log N + 2^{2m} \cdot \frac{86}{2^{15}}  \\
& \geq & N \cdot \lambda (J) + c \cdot (\log N)^2.
\end{eqnarray*}
Here $c$ is a positive constant and we used $2^{2m} \geq \frac{\left(\log N\right)^2}{16}$. \\
The result follows.  
\begin{flushright}
$\Box$
\end{flushright}

\begin{remark}{\rm
The result heavily depends on the strong property of the Pascal matrix $P$ treated in Section~\ref{sect_c}. In searching for normal numbers $\alpha$ with a potentially even better order of normality it makes sense to consider similar constructions as for Levin's binary normal number but with weaker dependence in its generating matrix. Possible candidates could be the examples given in \cite{Beck}.}
\end{remark}

\end{document}